\def\a{\alpha}
\def\b{\beta}
\def\th{\theta}
\def\R{\mathbb R}
\def\dd{\Delta}
\def\d{\delta}
\def\E{{\mathbb E}}
\def\G{{\mathbb G}}
\def\P{{\mathbb P}}
\def\l{\lambda}
\def\labda1{\lambda_1}
\def\labda2{\lambda_2}
\def\e{\varepsilon}
\def\s{\sigma}
\def\comment#1{\relax}
\def\=in{\mathop{\rm =}}
\numberwithin{equation}{section}
\theoremstyle{plain}
\def\one{{\mathbf 1}}
\def\th{\theta}
\def\P{{\mathbb P}}
\def\G{{\mathbb G}}
\def\th{\theta}
\def\P{{\mathbb P}}
\def\G{{\mathbb G}}
\def\eps{\epsilon}
\def\s{\sigma}
\newtheorem{theorem}{Theorem}[section]
\newtheorem{lemma}{Lemma}[section]
\newtheorem{remark}{Remark}[section]
\newtheorem{corollary}{Corollary}[section]
\begin{document}
\begin{frontmatter}
\title{Credible intervals and bootstrap confidence intervals in monotone regression}
\runtitle{monotone regression}
\begin{aug}
\author{\fnms{Piet}~\snm{Groeneboom} and \fnms{Geurt}~\snm{Jongbloed}}
\address{Delft Institute of Applied Mathematics, Delft University of Technology\\ \rm
P.Groeneboom@tudelft.nl, G.Jongbloed@tudelft.nl}
%\address[B]{DIAM, Delft University of Technology\printead[presep={,\ }]{e2}}
\end{aug}
\date{\today}

\begin{abstract}
In the recent paper \cite{moumita:21}, a Bayesian approach for constructing confidence intervals in monotone regression problems is proposed, based on credible intervals. We view this method from a frequentist point of view, and show that it corresponds to a percentile bootstrap method of which we give two versions. It is shown that a (non-percentile) smoothed bootstrap  method has better behavior and does not need correction for over- or undercoverage. The proofs use martingale methods.
\end{abstract} 

\begin{keyword}[class=AMS]
\kwd[Primary ]{62G05}
\kwd{62N01}
\kwd[; secondary ]{62-04}
\end{keyword}

\begin{keyword}
\kwd{credible intervals}
\kwd{confidence intervals}
\kwd{Chernoff's distribution}
\kwd{smoothed bootstrap}
\kwd{monotone regression}
\end{keyword}

\end{frontmatter}
\section{Introduction}
\label{sec:Intro}
In many fields of application, inference on a monotone function is both natural and needed. Many  examples of applications can be found in the books on shape constrained statistical inference \cite{b4:72}, \cite{rwd:88}, \cite{silvapulle2005constrained} and \cite{piet_geurt:14}. Natural estimators of monotone functions exist. In order to assess the accuracy of these, there is a need for uncertainty quantification. In the frequentist setting, this can be done by (pointwise) confidence intervals for the monotone function of interest. From a Bayesian perspective, credible intervals are a common method to quantify uncertainty.

 We consider the following model for our observations $D_n=\{(X_1,Y_1),\ldots,(X_n,Y_n)\}$, also considered in \cite{moumita:21}.
 \begin{align*}
 	Y_i=f_0(X_i)+\eps_i,\qquad i=1,\dots,n.
 \end{align*}
 Here $f_0:[0,1]\mapsto\R$ is monotone nondecreasing, the $\e_i$ are i.i.d.\ sub-Gaussian with expectation $0$ and variance $\s_0^2$, independent of the $X_i$'s, and the $X_i$ are i.i.d. with non-vanishing density $g$ on $[0,1]$. The classical least squares estimator (LSE) $\hat f_n$ of $f_0$, under the condition that $\hat f_n$ is nondecreasing minimizes
 \begin{align}\label{eq:sumsquares}
 	\sum_{i=1}^n\left(Y_i-f(X_i)\right)^2
 \end{align}
 over all nondecreasing functions $f:[0,1]\mapsto\R$. This estimator, which is also the maximum likelihood estimator if we assume the $\eps_i$ to be i.i.d.\ centered normally distributed, can be explicitly constructed based on the data. Denoting by $0<x_{1}<x_{2}<\cdots<x_{n}<1$ the observed ordered $X_i$'s and by $y_i$ the corresponding $Y$ values (so $(x_1,y_1),\ldots,(x_n,y_n)$ represent the original data pairs, but with first coordinate sorted in increasing order), Lemma 2.1 in \cite{piet_geurt:14} shows that $\hat{f}_n$ can be taken piecewise constant on the intervals $(x_{i-1},x_{i}]$, $1\le i\le n$ and that $\hat{f}_n(x_{i})$ can be obtained as the left derivative of the greatest convex minorant of the diagram consisting of the points $\{P_i\,:\,0\le i\le n\}$ with
 $$
 P_0=(0,0) \mbox{ and for } 1\le i\le n,\,\,\,P_i=\left(\frac{i}{n},\frac1n\sum_{j=1}^iy_j\right),
 $$
 evaluated at the point $P_i$. 
 
 As is clear from this characterization, $\hat{f}_n$ will be a nondecreasing step function with its jumps concentrated on a data-dependent subset of the observed points $x_{i}$. 
 Fixing a number of points in $[0,1]$, say $0=\tau_0<\tau_1<\tau_2<\cdots<\tau_m=1$, one can also minimize (\ref{eq:sumsquares}) over all nondecreasing functions, piecewise constant on the intervals $I_j=(\tau_{j-1},\tau_j]$. Then, writing  $n_j=|\{i\,:\,i\in I_j\}|$ and $\bar{y}_j=(\sum_{x_i\in I_j}y_i)/n_j$, we have
 \begin{align}\nonumber
 	\sum_{i=1}^n\left(y_i-f(x_i)\right)^2&=\sum_{j=1}^m\sum_{x_i\in I_j}\left(y_i-\bar{y}_j+\bar{y}_j-f(\tau_j)\right)^2=\\ \label{eq:decompss}
 	&=\sum_{j=1}^m\sum_{x_i\in I_j}\left(y_i-\bar{y}_j\right)^2+\sum_{j=1}^m\left(\bar{y}_j-f(\tau_j)\right)^2n_j,
\end{align}
where we use that for the current function class, $f(x_i)=f(\tau_j)$ for $x_i\in I_j$. As the first  term in (\ref{eq:decompss}) does not involve $f$, minimizing it boils down to a weighted isotonic regression. The solution to this minimization problem also allows for a graphical construction. The optimal value of $f(\tau_j)$ is the left derivative, taken at the point $P_j$, of the greatest convex minorant of the diagram of points consisting of
\begin{align*}
	P_0=(0,0), \,\, P_j=\left(\frac1n\sum_{k=1}^jn_k,\frac1n\sum_{k=1}^j n_k\bar{y}_k \right),\,\,\,1\le j\le m.
\end{align*} 
From decomposition (\ref{eq:decompss}) it also follows that the piecewise constant function $f$ defined by $f(x_i)=\bar{y}_j$ for $x_i\in I_j$ is the least squares estimator over the class of piecewise constant functions without imposing the restriction of monotonicity.

In \cite{moumita:21}, a Bayesian method is proposed for constructing pointwise confidence intervals for a monotone regression function, based on credible intervals. The method is proved to give overcoverage for large sample sizes, but a correction table is given in \cite{moumita:21} to correct for the overcoverage. Purely based on the algorithm that results in the credible intervals, the approach can be seen as a particular percentile bootstrap method. 

In Section  \ref{section:credible_intervals} we describe the approach in \cite{moumita:21} to construct confidence intervals via credible intervals. 
In Section \ref{sec:credboot} we give the interpretation of the credible intervals as percentile bootstrap intervals and in particular Theorem \ref{theorem:limit_posterior} for the bootstrap procedure, corresponding to the key Theorem 3.3 in \cite{moumita:21} for the construction of the credible intervals. In proving Theorem \ref{theorem:limit_posterior} we use a martingale method.

In analogy with the Bayesian procedure, we construct the bootstrap intervals by generating normal noise variables (following \cite{moumita:21}), using the empirical Bayes method for estimating the variance of these variables, defined in Section \ref{sec:credboot}.
In subsection \ref{subsec:classic_bootstrap} we define a classical bootstrap procedure, where we resample with replacement from the original data, and do not have to estimate the variance. These two methods correspond, respectively,  to the ``regression method'' (holding the regressors $X_i$ in the regression model fixed), and the ``correlation model'' (where we consider the $X_i$ as random) in the terminology of \cite{hall:92book}. The results of the three methods are highly similar.
 
It has been proved by several authors that  the straightforward bootstrap is inconsistent in this situation (see, e.g., \cite{kosorok:08}, \cite{SenXu2015} and \cite{sen_mouli_woodroofe:10} for results related to this phenomenon).This straightforward bootstrap uses resampling with replacement from the pairs $(X_i,Y_i)$ and computes the monotone least squares estimator $\hat f_n^*$ based bootstrap samples and approximates the distribution of $n^{1/3}\left(\hat{f}_n(t_0)-f_0(t_0)\right)$ by that of the analogous `bootstrap quantity' $n^{1/3}\left(\hat{f}_n^*(t_0)-\hat{f}_n(t_0)\right)$. 
The Bayesian approach and the percentile bootstrap approach circumvent this difficulty by using the convergence in distribution of the random variable (as a function of $D_n=\{(X_1,Y_n),\dots,(X_n,Y_n)$\})
\begin{align}
\label{convergence_conditional_prob}
\P\left(n^{1/3}\bigl\{\hat f_n^*(t_0)-f_0(t_0)\bigr\}\le x\Bigm|D_n\right)
\end{align}
to
\begin{align}
\label{conditional_limit}
\P\left[\left(\frac{4\s_0^2f_0'(t_0)}{g(t_0)}\right)^{1/3}\text{\rm argmin}_{t\in\R}\left[W_1(t)+W_2(t)+t^2\right]\le x\Bigm| W_1\right]
\end{align}
see Theorem 3.3 in \cite{moumita:21} and Theorem \ref{theorem:limit_posterior}, where $W_1$ and $W_2$ are two independent standard two-sided Brownian motions, originating from zero. Here $\hat f_n^*$ is either the projection-posterior Bayes estimate (to be described in Section \ref{section:credible_intervals}), in which case we would write
\begin{align*}
\Pi\left(n^{1/3}\bigl\{\hat f_n^*(t_0)-f_0(t_0)\bigr\}\le x\Bigm|D_n\right)
\end{align*}
instead of (\ref{convergence_conditional_prob}), or the percentile bootstrap estimate $\hat f_n^*$.
The limit (\ref{conditional_limit}) leads to credible intervals which asymptotically give overcoverage, which can be corrected for as described in \cite{moumita:21}.

In Section \ref{sec:CI_cuberoot_n} an altogether different method for constructing the confidence intervals is given, where we use the smoothed (non-percentile) bootstrap. Here we keep the regressors fixed again, and resample with replacement residuals w.r.t.\ a smooth estimate of the regression function: the Smoothed Least Squares Estimator (SLSE).
In this way, using theory from \cite{geurt_piet:23}, consistent confidence intervals are constructed.

In fact, instead of $n^{1/3}\bigl\{\hat f_n^*(t_0)-f_0(t_0)\bigr\}=n^{1/3}\bigl\{\hat f_n^*(t_0)-\hat f_n(t_0)+\hat f_n(t_0)-f_0(t_0)\bigr\}$
we can now consider
\begin{align*}
n^{1/3}\bigl\{\hat f_n^*(t_0)-\tilde f_{nh}(t_0)+\hat f_n(t_0)-f_0(t_0)\bigr\},
\end{align*}
where $\hat f_n^*$ is based on  sampling with replacement from the residuals w.r.t.\ the SLSE $\tilde f_{nh}$ with bandwidth $h$ of order $n^{-1/5}$. In contrast with Theorem 3.3 in \cite{moumita:21} and Theorem \ref{theorem:limit_posterior} in the present paper,
we now have convergence of
\begin{align*}
\P\left(n^{1/3}\bigl\{\hat f_n^*(t_0)-\tilde f_{nh}(t_0)+\hat f_n(t_0)-f_0(t_0)\bigr\}\le 0\Bigm|D_n\right)
\end{align*}
to the uniform distribution on $[0,1]$ (using the symmetry of the limit distribution of $n^{1/3}\{\hat f_n(t_0)-f_0(t_0)\}$), see Theorem \ref{theorem:limit_smooth_bootstrap}.

For the non-percentile bootstrap, however, it is more natural to consider
\begin{align*}
n^{1/3}\bigl\{\hat f_n(t_0)-\{\hat f_n^*(t_0)-\tilde f_{nh}(t_0)\}-f_0(t_0)\bigr\}
\end{align*}
(avoiding ``looking up the wrong tables, backwards'', see the discussion on p.\ 938 of \cite{hall:88}), for which we also get convergence to the the uniform distribution of
\begin{align*}
\P\left(n^{1/3}\bigl\{\hat f_n(t_0)-\{\hat f_n^*(t_0)-\tilde f_{nh}(t_0)\}-f_0(t_0)\bigr\}\le 0\Bigm|D_n\right),
\end{align*}
implying the consistency of the smoothed bootstrap method. This method of constructing confidence intervals seems superior in comparison to the Bayesian method and the percentile bootstrap intervals, as is suggested by our simulations of the coverage of the different methods.

 \section{Credible intervals}
 \label{section:credible_intervals}
 In \cite{moumita:21}, a Bayesian approach to construct confidence intervals for a monotone regression function is proposed. A prior distribution is defined on the class of functions on $[0,1]$, supported on a sieve of piecewise constant functions. More specifically, the interval $[0,1]$ is partitioned into $J$ intervals $I_j=((j-1)/J,j/J]$, $1\le j\le J$. In the notation of the previous section, $\tau_j=j/n$. A draw from the prior distribution is then represented by
 \begin{align}
 	\label{eq:reprfunctitotheta}
 	f_{\bm\th}=\sum_{j=1}^J \th_j 1_{I_j},\qquad \bm\th=(\th_1,\dots,\th_J),
 \end{align}
 where the $\th_j$ are independent normal random variables with expectation $\zeta_j$ and variance $\sigma_0^2\lambda_j^2$, where $0<\l_j<\infty$ (including noise variance  $\sigma_0^2$ as a factor is only done for convenience in formulas to follow). Note that function (\ref{eq:reprfunctitotheta}) will not automatically be monotone, a requirement that would seem natural in this setting. The main reason not to impose this, is that with this prior distribution, the posterior distribution can be conveniently analytically computed. Indeed, as seen in the Appendix, the posterior distribution of $\bm\th$ has independent coordinates, where $\th_j$ has distribution
 \begin{align}
 	\label{posterior_theta}
 	\th_j\sim N\left(\frac{n_j\bar y_j+\zeta_j/\l_j^2}{n_j+1/\l_j^2},\frac{\s_0^2}{n_j+1/\l_j^2}\right),\qquad n_j=\sum_{i=1}^n 1_{I_j}(x_i).
 \end{align}
Here, as before, $\bar y_j$ is the mean of the $y_i$ for the $x_i$ belonging to the $j$-th interval $I_j$. As mentioned in the previous section, this corresponds to the MLE of $f$ over the (nonrestricted) class of functions which are constant on the intervals $I_j$.  A draw from the posterior on the set of piecewise constant functions on $[0,1]$ proceeds via (\ref{eq:reprfunctitotheta}), based on a draw from the posterior of ${\bm\th}$. The resulting function will in general not be monotone, so the support of the posterior extends outside the set of monotone functions on $[0,1]$. 
 
 Following ideas of \cite{lin2014bayesian}, \cite{bhaumik2015bayesian} and \cite{bhaumik2017efficient}, in \cite{moumita:21}  a draw $f_{\bm\th}$ from the `raw posterior' is subsequently projected on the set of nondecreasing functions on $[0,1]$, piecewise constant on the intervals $I_j$, $1\le j\le J$, via weighted isotonic regression. This projection $f_{\th}^*$  is computed using Lemma 2.1 of \cite{piet_geurt:14}. This boils down to computing the left derivative of the greatest convex minorant of the cusum diagram consisting of the points $P_j$, for $0\le j\le J$ with $P_0=(0,0)$ and
 \begin{align*}
 	P_j=\left(\frac1n\sum_{k=1}^j n_k,\frac1n\sum_{k=1}^j n_k\th_k\right),\,\, 1\le j\le J
 \end{align*}
 if all $n_j>0$, see (3.2) in \cite{moumita:21} (note that Lemma 2.1 in \cite{piet_geurt:14} has the condition that all weights are {\it strictly} positive). It is clear that computing the isotonic regression can be restricted to those $j$ with $n_j>0$ and that for those $j$ with $n_j=0$, $f_{\th}^*$ can be given any value such that monotonicity is not violated.

 	In this procedure, various choices need to be made. One is the number of intervals $J$. In \cite{moumita:21}, the asymptotic bounds 
 	\begin{align}
 		\label{bounds_for_J}
 		n^{1/3}\ll J\ll n^{2/3}
 	\end{align}
 	are given and the closest integer to $n^{1/3}\log n$ is chosen in the simulations. Here the symbol ``$\ll$'' means ``is of lower order than", as $n\to\infty$. Also the noise variance $\sigma_0^2$ needs to be dealt with. For this, \cite{moumita:21}
 	choose the natural empirical Bayes estimate (fixing $\zeta$ and $\Lambda$), given by
 	\begin{align}
 		\label{eq:empBayesSigma2}
 		\hat\s_n^2=n^{-1}(\bm y-\bm B\bm\zeta)^T(\bm B\bm\Lambda\bm B^T+\bm I)^{-1}(\bm y-\bm B\bm\zeta)
 		\qquad \bm\Lambda=\text{diag}(\lambda_1^2,\dots,\lambda_J^2),
 	\end{align}
 	where ${\bm B}=(b_{ij})$ the $n\times J$ `design matrix'
 	with entries $b_{ij}=1_{I_j}(x_i)$, corresponding to the regression model
 	$
 	\bm y=\bm B\bm\th+\bm\epsilon
 	$ following from the representation $f_{\bm\th}(x_i)=({\bm B}{\bm \theta})_i$;
 	see the Appendix. As also shown in the Appendix, this estimate can be rewritten as
 	\begin{align}
 		\label{eq:exprEmpBayesSigma2}
 			\hat\s_n^2&=\frac1n
 			\sum_{j=1}^J\sum_{x_i\in I_j}\left(y_i-\bar{y}_j\right)^2+\frac1n\sum_{j=1}^J\frac{n_j(\bar{y}_j-\zeta_j)^2}{1+n_j\lambda_j^2}\\
 			&= \frac1n \sum_{i=1}^n\left(y_i-\bar{f}(x_i)\right)^2+\frac1n\sum_{j=1}^J\frac{n_j(\bar{y}_j-\zeta_j)^2}{1+n_j\lambda_j^2}\nonumber
 	\end{align}
 where $\bar{f}(x)=\sum_{j=1}^J\bar{y}_j1_{I_j}(x)$ is the aforementioned maximum likelihood estimate of $f_0$ over all piecewise constant (not necessarily monotone) functions on the intervals $I_j$. The first term in this expression is the mean of the squared residuals of the observations with respect to $\bar{f}$. This is a quite natural estimator of the variance. The influence of the hyper parameters $\bm \zeta$ and $\bm \Lambda$ on the estimate of $\sigma_0^2$ can be inferred from the second term in the expression.
 
 As shown in the Appendix, the empirical Bayes estimate for $\zeta$, not taking into account monotonicity  is given by 
 \begin{align}
 	\label{eq:empBayesZeta}
 \bar{\zeta}=(\bar{y}_1,\bar{y}_2,\ldots,\bar{y}_J)^T.
 \end{align}
Substituting this in (\ref{eq:exprEmpBayesSigma2}) makes the second term vanish. Using the empirical Bayes estimate over the monotone vectors $\zeta$, being the isotonic regression of $\bar{\zeta}$ with weights $n_j/(1+n_j\lambda_j^2)$ increases the empirical Bayes estimate for $\sigma_0^2$.

	With the choices  $\bm \zeta=0$ and $\lambda_j\equiv\lambda$ made in \cite{moumita:21}, the empirical Bayes estimate for $\sigma_0^2$ becomes
 \begin{align}
 	\label{eq:empbay}
 		\hat{\sigma}_n^2=\frac1n \sum_{i=1}^n\left(y_i-\bar{f}(x_i)\right)^2+\frac1n\sum_{j=1}^J\frac{n_j\bar{y}_j^2}{1+n_j\lambda^2}
 \end{align}
 	For relatively large values of $\lambda^2n_j$, the second term becomes negligible to the first.

 	Because the density $g$ generating the $X_i$'s is nonvanishing on $[0,1]$, the (random) number  $N_j$ of points in intervals of length of the order $1/J=1/J_n$ is (in the setting of \cite{moumita:21}) of the order $n/J_n$. With the restriction $n^{1/3}<< J_n<< n^{2/3}$, this means that $N_j$ will be of bigger order than $n^{1/3}$; taking $J_n\approx n^{1/3}\log n$, $N_j$ will be of order $n^{2/3}/\log n$. Therefore, for reasonable choice of $\lambda$,  $\lambda^2N_j>>1$ with high probability when $n$ is large. 
	
 	Considering (\ref{posterior_theta}) with $\zeta_j\equiv0$ and $\lambda_i\equiv \lambda$, fixed, as chosen in \cite{moumita:21}, a draw from the raw posterior of $\theta_j$ can be viewed as
 	$$
 	\theta_j=\bar{f}(j/J)+\tilde{\epsilon}_j
 	$$
 	where 
 	\begin{equation}
 		\label{eq:naive-est-equation}
 		\bar{f}(j/J)=\frac{\bar{y}_j}{1+1/(n_j\lambda^2)}\mbox{ and }\tilde{\epsilon}_j\sim^{\rm indep} N\left(0,\frac{\sigma_0^2/n_j}{1+1/(n_j\lambda^2)}\right),
 	\end{equation}
 where $\sigma_0^2$ is estimated by its Empirical Bayes estimate (\ref{eq:empbay}).
 Again due to the restriction $n^{1/3}<< J_n<< n^{2/3}$,  $\bar{f}$ in (\ref{eq:naive-est-equation}) is a (generally non-monotone) local average estimator of $f_0$. The added noise $(\tilde{\epsilon}_j)$ is normal and reflects the variance of the original $\bar{Y}_j$. This means that the draw from the projected posterior is computed as left derivative of the cumulative sum diagram consisting of the points $P_j$, $0\le j\le J$ with $P_0=(0,0)$ and 
 \begin{align}
 	\label{eq:diagramBayes}
 	P_j=\left(\sum_{k=1}^jn_k,\sum_{k=1}^jn_k\left(\frac{\bar{y}_k}{1+1/(n_k\lambda^2)}+\tilde{\epsilon}_k\right)\right)
 \end{align}

 	In \cite{moumita:21}, the following example is considered:
 	\begin{align*}
 		f_0(x)=x^2+x/5,\qquad x\in[0,1].
 	\end{align*}
 	Here the $X_i$ are independently uniformly distributed on $[0,1]$ and the $\e_i$ have a normal $N(0,0.01)$ distribution. The choices for $J$, $\zeta_j$ and $\l_j$ are $J=\lfloor n^{1/3}\log n\rfloor$, $\zeta_j=0$ and $\l_j=10$, following the parametrization in the {\tt R} code, kindly sent to us by Moumita Chakraborty.
 	A picture of a single draw $f_{\bm\th}$ from the raw posterior and its isotonic projection $f_{\bm\th}^*$, for a sample of size $n=500$ is shown in Figure \ref{figure:posterior}.

 	\begin{figure}[!ht]
 		\begin{subfigure}[b]{0.45\textwidth}
 			\includegraphics[width=\textwidth]{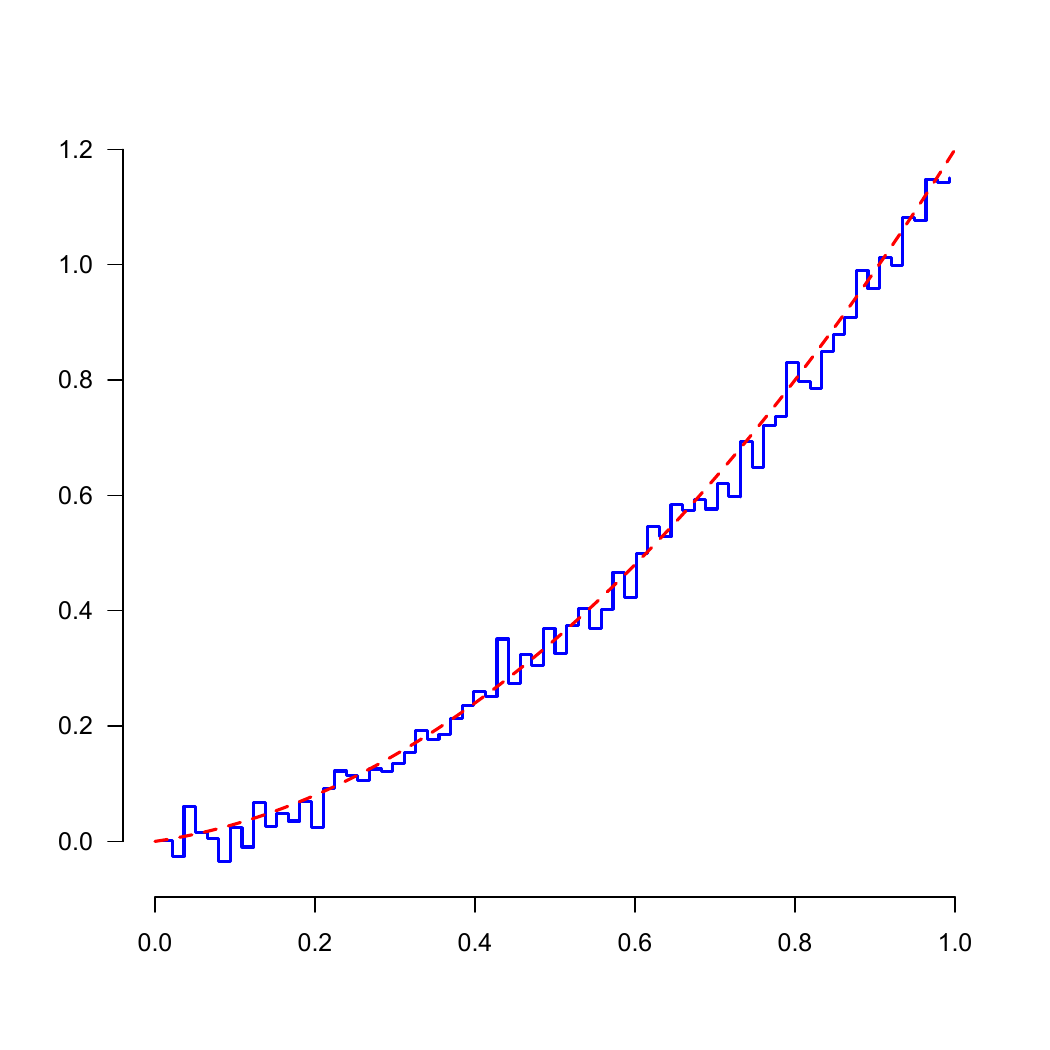}
 			\caption{}
 			\label{fig:posterior_estimate}
 		\end{subfigure}
 		\begin{subfigure}[b]{0.45\textwidth}
 			\includegraphics[width=\textwidth]{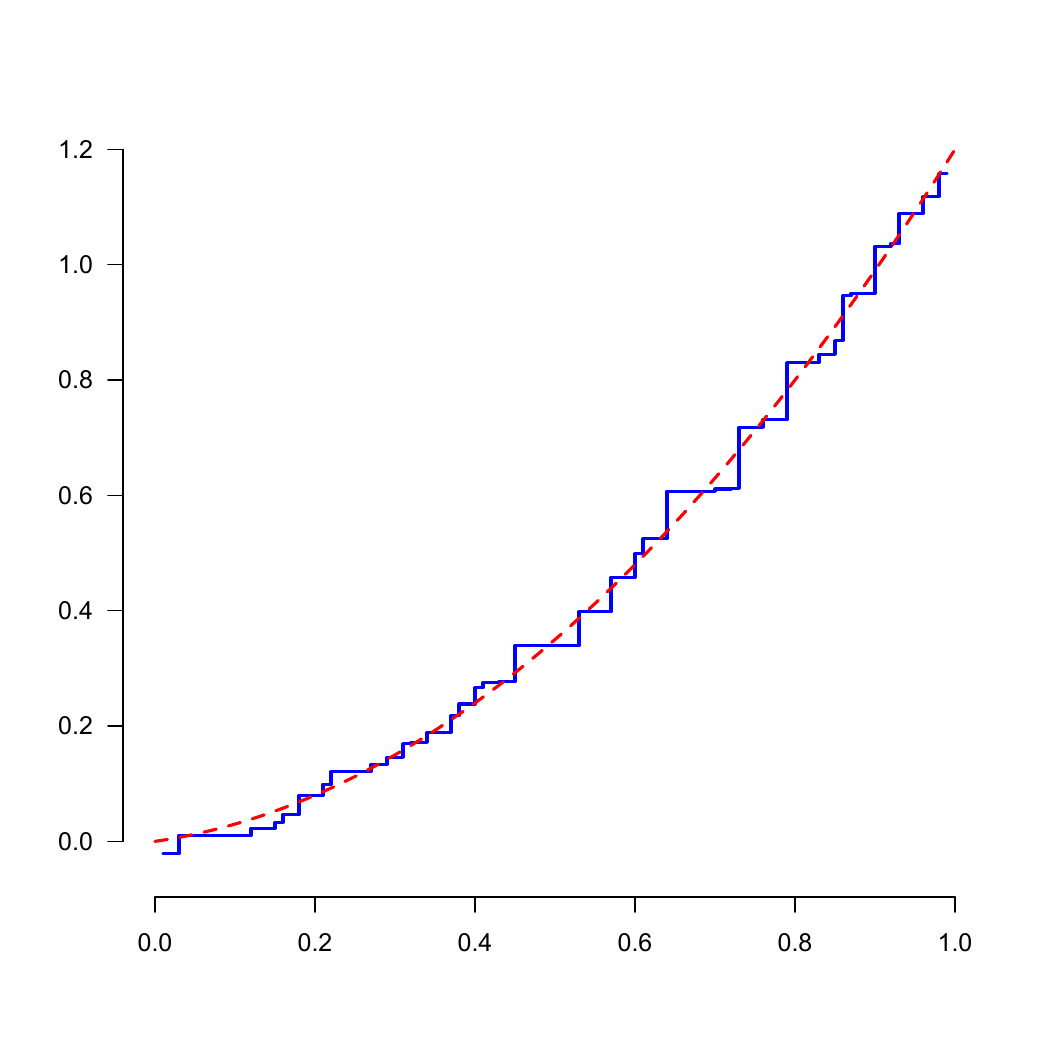}
 			\caption{}
 			\label{fig:isotonic_projection_posterior}
 		\end{subfigure}
 		\caption{(a) A single draw  $f_{\bm\th}$ (blue) from the raw posterior and (b) the  corresponding projection $f_{\bm\th}^*$ (blue), for a sample of size $n=1000$ and $f_0(x)=x^2+x/5$ (red, dashed).}
 		\label{figure:posterior}
 	\end{figure}
 	
 	Now, one can generate $1000$ posterior samples of $\bm\th=(\th_1,\dots,\th_J)$ from the posterior normal distribution, specified in (\ref{posterior_theta}), and consider  the $\tfrac12\a$th and $(1-\tfrac12\a)$th percentiles of the isotonic projections $\hat f_{\bm\th}^*(t)$ at a fixed point $t$. Would this give us, at least asymptotically, valid $95\%$ confidence intervals for $f_0(t)$?
 	
 	The question is answered in \cite{moumita:21} by Theorem 3.3 on p.\ 1017:  $\Pi\left(n^{1/3}\{\hat f_{\bm\th}^*(t)-f_0(t)\}\le z|D_n\right)$ converges to a limit distribution, leading to wider intervals than in the situation in which we have the Chernoff distribution as limit. The fraction by which they become wider is given in \cite{moumita:21}.

\section{Credible intervals as bootstrap percentile confidence intervals}
\label{sec:credboot}
\subsection{The percentile bootstrap for the regression model}
\label{subsec:percentile_regression}
In the Bayes approach, we considered random parameters $\th_j$, with (posterior) distribution given in (\ref{posterior_theta}).
In the simulations, accompanying the paper \cite{moumita:21}, the prior parameter $\bm\zeta$ was taken $\bm\zeta=\bm0$ and $\l_j\equiv\l>0$.
Moreover, the empirical Bayes estimator $\hat\s_n^2$ was taken as estimator for $\sigma_0^2$.
 
With these choices, we get:
\begin{align*}
\th_j\sim N\left(\frac{\bar y_j}{1+1/(n_j\l^2)},\frac{\hat\s_n^2}{n_j\bigl\{1+1/(n_j\l^2)\bigr\}}\right),  1\le j\le J,\,\, {\rm independently}.
\end{align*}
This means asymptotically, in first order:
\begin{align}
\label{Bayes_posteriors}
\th_j\sim N\left(\bar y_j,\frac{\hat\s_n^2}{n_j}\right),\,\,1\le j\le J
\end{align}
if we keep $\l$ bounded away from zero ($\l^2=100$ was taken in the simulations with paper \cite{moumita:21}). Next the confidence intervals were determined by taking the percentiles of simulated values of the (weighted) monotonic projections of the $\th_j$'s with distribution given by (\ref{Bayes_posteriors}).

Algorithmically, this can be viewed as a percentile bootstrap method, where a bootstrap sample is generated by adding noise to an estimate of the regression function. The regression estimate in this setting is the (weighted) least squares estimate of $f_0$, piecewise constant on intervals $I_j$ and not taking into account the monotonicity constraint (so: $\bar{y}_j$ on $I_j$). The noise is sampled from a centered normal distribution with estimated variance. Then, $\hat f^{*}_n$ is determined by computing the (weighted) isotonic regression based on the bootstrap dataset. Adopting the ''bootstrap notation'' rather than the ''Bayesian notation'' $\theta_j$, define
where
\begin{align}
	\label{distribution_theta}
	Y_j^*\sim N(\bar Y_j,\hat\s_n^2/N_j),\qquad j=1,\dots,J
\end{align}
and note that given the original data, $(Y_1^*,\dots,Y_J^*)=^D(\theta_1,\ldots,\theta_J)$, in view of (\ref{Bayes_posteriors}) and (\ref{distribution_theta}). Using this notation, $\hat f_n^*$ is found by taking the left derivative of the convex minorant of the cusum diagram, running through the points
\begin{align}
	\label{cusum:percentile}
	P_j^*=\left(\sum_{i=1}^j N_i,\sum_{i=1}^j N_i Y_i^*\right),\qquad j=1,\dots,J.
\end{align}

To study the asymptotic behavior of $\hat{f}^*_n$, we  define the (local) ``bootstrap'' process
\begin{align}
\label{credible_process}
\widetilde W_n^*(t)=n^{-1/3}\Biggl\{\sum_{j:j/J\in[0,t_0+n^{-1/3}t]}N_j\left\{Y_j^*-\bar Y_j\right\}-\sum_{j:j/J\in[0,t_0]}N_j\left\{Y_j^*-\bar Y_j\right\}\Biggr\}
\end{align}
and the (local) ``sample'' process $\widetilde W_n$  by:
\begin{align}
\label{2nd_martingale}
\widetilde W_n(t)&=n^{-1/3}\Biggl\{\sum_{j:j/J\in[0,t_0+n^{-1/3}t]}1_{\{N_j>0\}}N_j\Bigl\{\bar Y_j-\frac{n}{N_j}\int_{I_j}f_0(u)\,d\G_n(u)\Bigr\}\\
&\qquad\qquad\qquad\qquad\qquad-\sum_{j:j/J\in[0,t_0]}1_{\{N_j>0\}}N_j\Bigl\{\bar Y_j-\frac{n}{N_j}\int_{I_j}f_0(u)\,d\G_n(u)\Bigr\}\Biggr\}.
\end{align}

With these definitions we have the following theorem, similar to Theorem 3.3 in \cite{moumita:21}.

\begin{theorem}
\label{theorem:limit_posterior}
Let $D_n=\{(X_1,Y_1),\dots,(X_n,Y_n)\}$ and let $\hat f_n^*$ be a draw generated according to the bootstrap procedure described above. Then, for each fixed $x\in\R$, as $n\to\infty$,
\begin{align}
\label{scaled_bootrstrap_values}
&\P\left(n^{1/3}\bigl\{\hat f_n^*(t_0)-f_0(t_0)\bigr\}\le x\Bigm|D_n\right)\nonumber\\
&\stackrel{{\mathcal D}}\longrightarrow \P\left[\left(\frac{4\s_0^2f_0'(t_0)}{g(t_0)}\right)^{1/3}\text{\rm argmin}_{t\in\R}\left[W_1(t)+W_2(t)+t^2\right]\le x\Bigm| W_1\right]
\end{align}
where $W_1$ and $W_2$ are independent standard two-sided Brownian motions.
\end{theorem}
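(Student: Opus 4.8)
The plan is to reduce the statement about the bootstrapped estimator to a statement about the location of an argmin, through the switch relation for the greatest convex minorant, and then to prove joint convergence of the relevant localized cusum process to a sum of two independent Brownian motions and a parabola. First I would invoke the switch relation: since $\hat f_n^*$ is the left derivative of the greatest convex minorant of the cusum diagram through the points $P_j^*$ of (\ref{cusum:percentile}), for any level $a$ the event $\{\hat f_n^*(t_0)\ge a\}$ is equivalent to the event that the argmin over $s$ of the tilted cusum process
\[
s\mapsto \sum_{i:i/J\le s} N_i Y_i^*-a\sum_{i:i/J\le s} N_i
\]
lies at or to the left of $t_0$. Taking $a=f_0(t_0)+xn^{-1/3}$ and localizing $s=t_0+tn^{-1/3}$ turns the event $\{n^{1/3}(\hat f_n^*(t_0)-f_0(t_0))\ge x\}$ into an event about the argmin in $t$ of a localized process $U_n^*$.

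Next I would decompose $U_n^*$. Writing $Y_j^*=\bar Y_j+(Y_j^*-\bar Y_j)$ and then splitting $\bar Y_j$ into its local conditional mean $(n/N_j)\int_{I_j}f_0\,d\G_n$ plus a centered remainder separates $U_n^*$ into three pieces: the bootstrap fluctuation $\widetilde W_n^*$ of (\ref{credible_process}); the sample fluctuation $\widetilde W_n$ of (\ref{2nd_martingale}); and a drift coming from the local behavior of $f_0$ and the tilt by $a$. Under the sieve restriction (\ref{bounds_for_J}) the error in replacing the local integrals of $f_0$ by the linearization $f_0(t_0)+f_0'(t_0)(s-t_0)$ is negligible, so the drift converges to $g(t_0)\{\tfrac12 f_0'(t_0)t^2-xt\}$, i.e.\ a parabola in $t$ whose curvature involves $f_0'(t_0)g(t_0)$ and whose linear part carries the shift by $x$.

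The probabilistic core is a martingale central limit argument, carried out twice. Conditionally on $D_n$, the increments $N_j(Y_j^*-\bar Y_j)$ in $\widetilde W_n^*$ are independent centered Gaussians; writing the partial sums in $t$ as a martingale, I would check that the predictable quadratic variation converges to a linear function of $t$—using $\hat\s_n^2\to\s_0^2$ and $N_j\approx n g(t_0)/J$ uniformly near $t_0$—and that a Lindeberg-type negligibility condition holds, giving conditional convergence (in $D_n$-probability) of $\widetilde W_n^*$ to a scaled two-sided Brownian motion $W_2$. An analogous but unconditional martingale CLT in the $\eps_i$ and $X_i$ gives $\widetilde W_n\Rightarrow$ a scaled $W_1$. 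Since the bootstrap noise is generated independently of the data, $W_1$ and $W_2$ are independent and the joint convergence holds with $\widetilde W_n^*$ conditionally independent of $\widetilde W_n$ given $D_n$.

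Finally I would combine these and apply the argmax continuous mapping theorem. After the standard rescaling absorbing the constants, $U_n^*$ converges to $(4\s_0^2 f_0'(t_0)/g(t_0))^{1/3}$ times $W_1(t)+W_2(t)+t^2$; the parabola forces an a.s.\ unique and tight argmin, so its location converges, which identifies the limit in (\ref{scaled_bootrstrap_values}). The delicate step—and what I expect to be the main obstacle—is transferring this to convergence \emph{in distribution of the random conditional CDF} on the left of (\ref{scaled_bootrstrap_values}): this CDF is a measurable functional of $\widetilde W_n$ (hence of $D_n$) with $\widetilde W_n^*$ integrated out, so I would use the joint convergence together with the conditional-independence structure and continuity of the conditional-argmin map to conclude convergence to $\P[\,\cdot\le x\mid W_1]$. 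Verifying the martingale-CLT conditions uniformly over the localization window under $n^{1/3}\ll J\ll n^{2/3}$, and justifying this last conditional transfer, constitute the technical heart of the proof.
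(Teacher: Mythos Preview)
Your proposal is correct and follows essentially the same route as the paper: the switch relation reduces the question to an argmin, the localized cusum is decomposed into $\widetilde W_n^*+\widetilde W_n+\text{drift}$, each stochastic piece is handled by a martingale functional CLT (the paper invokes Rebolledo's theorem and the quadratic variation process rather than the predictable version, but this is the same engine), and Brownian scaling plus the argmin continuous mapping finish the argument. The paper is somewhat terser than you about the final ``conditional transfer'' step you flag, and it records the tightness of $n^{1/3}\{U_n^*(a+n^{-1/3}x)-t_0\}$ separately, but structurally your outline matches.
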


Note that, for $t>0$,
\begin{align}
&n^{-1/3}\Biggl\{\sum_{j:j/J\in[0,t_0+n^{-1/3}t]}N_j\bigl\{Y_j^*-f_0(t_0)\bigr\}-\sum_{j:j/J\in[0,t_0]}N_j\bigl\{Y_j^*-f_0(t_0)\bigr\}\Biggr\}\nonumber\\
&=
\widetilde W_n^*(t)+\widetilde W_n(t)+n^{-1/3}\sum_{j:j/J\in[t_0,t_0+n^{-1/3}t]}n\,\int_{I_j}\{f_0(u)-f_0(t_0)\}\,d\G_n(u)\nonumber\\
&\sim \widetilde W_n^*(t)+\widetilde W_n(t)+\tfrac12 f_0'(t_0) g(t_0) t^2,
\end{align}
with a similar expansion for $t\le0$.

So the percentile bootstrap estimates have the same behavior as the Bayes estimates in \cite{moumita:21}. Histograms of estimates of the posterior probabilities for the Bayesian procedure in \cite{moumita:21} and the corresponding conditional probabilities of the percentile bootstrap in Lemma \ref{theorem:limit_posterior}  for varying $D_n$ of size $n=2000$ and $t_0=0.5$ are shown in Figure \ref{figure:posterior_df_Dn}. The estimates are  the relative frequencies in $1000$ posterior, resp.\ percentile bootstrap samples for each of the original (1000) samples.

	\begin{figure}[!ht]
 		\begin{subfigure}[b]{0.45\textwidth}
 			\includegraphics[width=\textwidth]{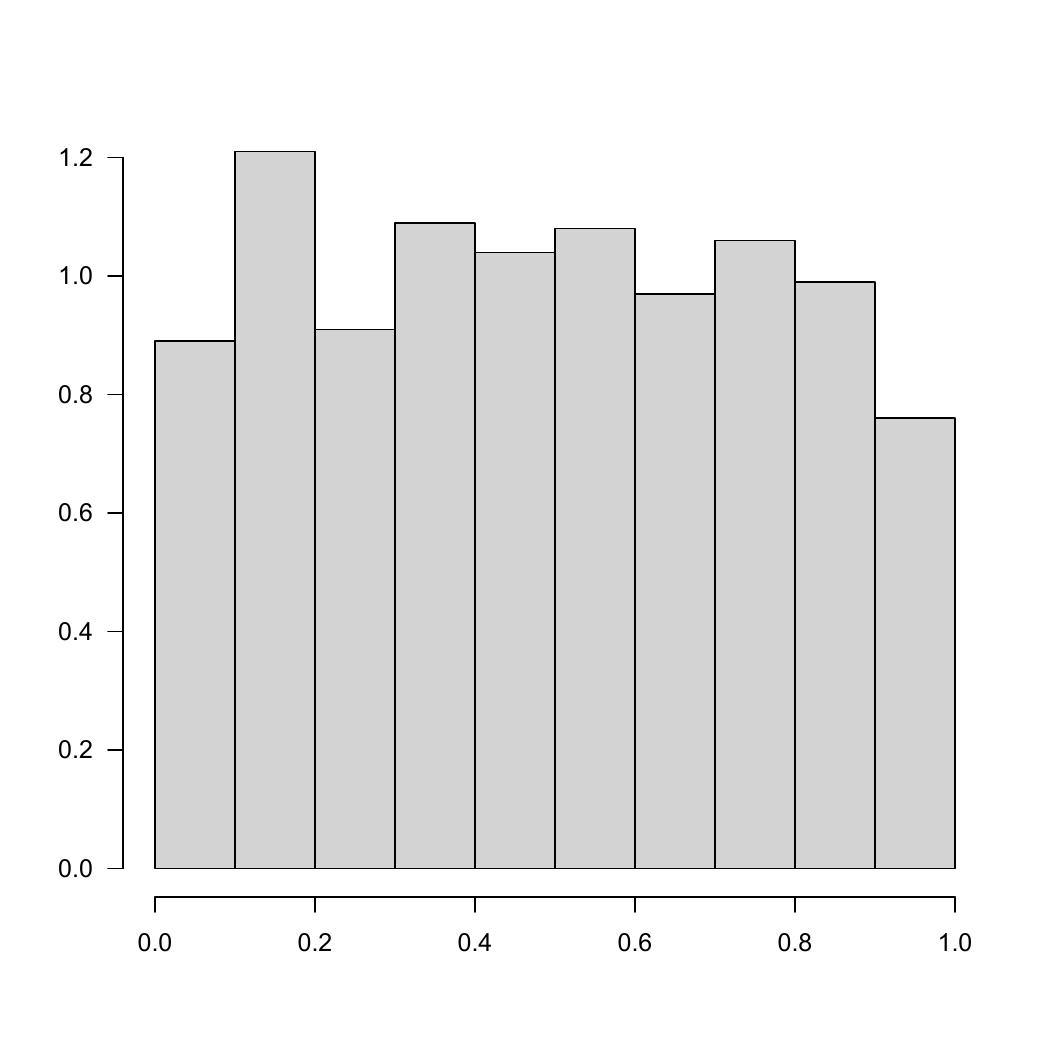}
 			\caption{}
 			\label{fig:credible-real_function_Dn}
 		\end{subfigure}
 		\begin{subfigure}[b]{0.45\textwidth}
 			\includegraphics[width=\textwidth]{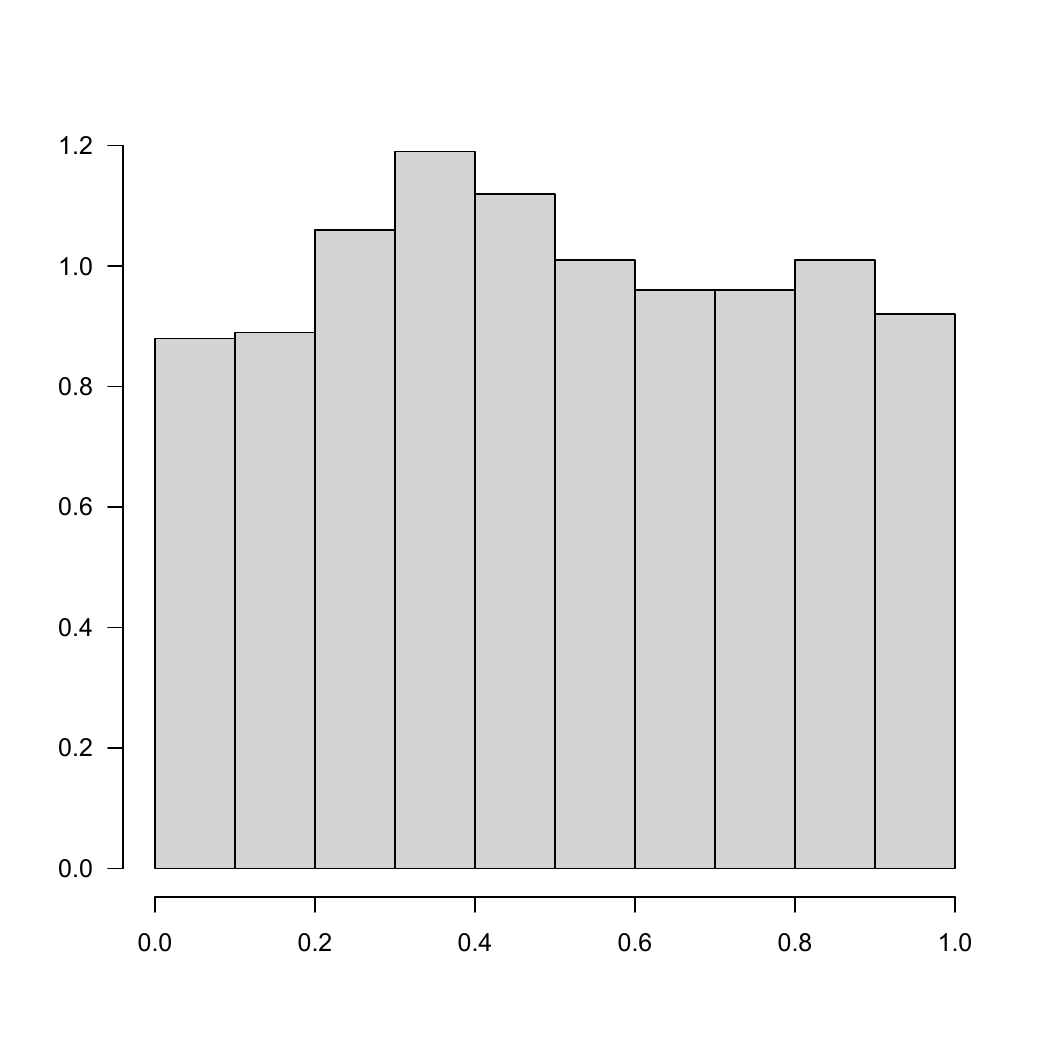}
 			\caption{}
 			\label{fig:credible-LSE_Dn}
 		\end{subfigure}
 		\caption{(a) Histogram of estimate of $\Pi\{n^{1/3}\bigl\{f_n^*(t_0)-f_0(t_0)\bigr\}\le0\bigm|D_n\}$ for the projection-posterior samples in \cite{moumita:21}, (b) Histogram of estimate of $\P\{n^{1/3}\bigl\{\hat f_n^*(t_0)-f_0(t_0)\bigr\}\le0\bigm|D_n\}$ for the percentile bootstrap. Both histograms are based on $1000$ samples $D_n$ of size $n=2000$ and $t_0=0.5$. }
\label{figure:posterior_df_Dn}
\end{figure}

\begin{remark}

{\rm Let $\dd_n=\Pi\{n^{1/3}\{f_n^*(t_0)-\hat f_n(t_0)\}\le0|D_n\}$. In Figure 1 on p.\ 1017 of \cite{moumita:21} three pictures of $\dd_n$ are shown for three different sets of simulated data, where $\hat f_n$ is the LSE. Is is not completely clear to us how $\dd_n$ is sampled here. Since we do not have an explict expression for $\dd_n$, it seems that  an estimate of $\dd_n$ has to be based on a sample of posterior draws $f^*(t_0)$. If we use such a procedure and consider the fluctuation of $\dd_n$ as a function of $D_n$, we get a histogram similar to the histograms in Figure 1 of \cite{moumita:21}. The estimates are relative frequencies in $1000$ samples of size $2000$. See Figure \ref{figure:posterior_df}.
}
\end{remark}

\begin{figure}[!ht]
\includegraphics[width=0.5\textwidth]{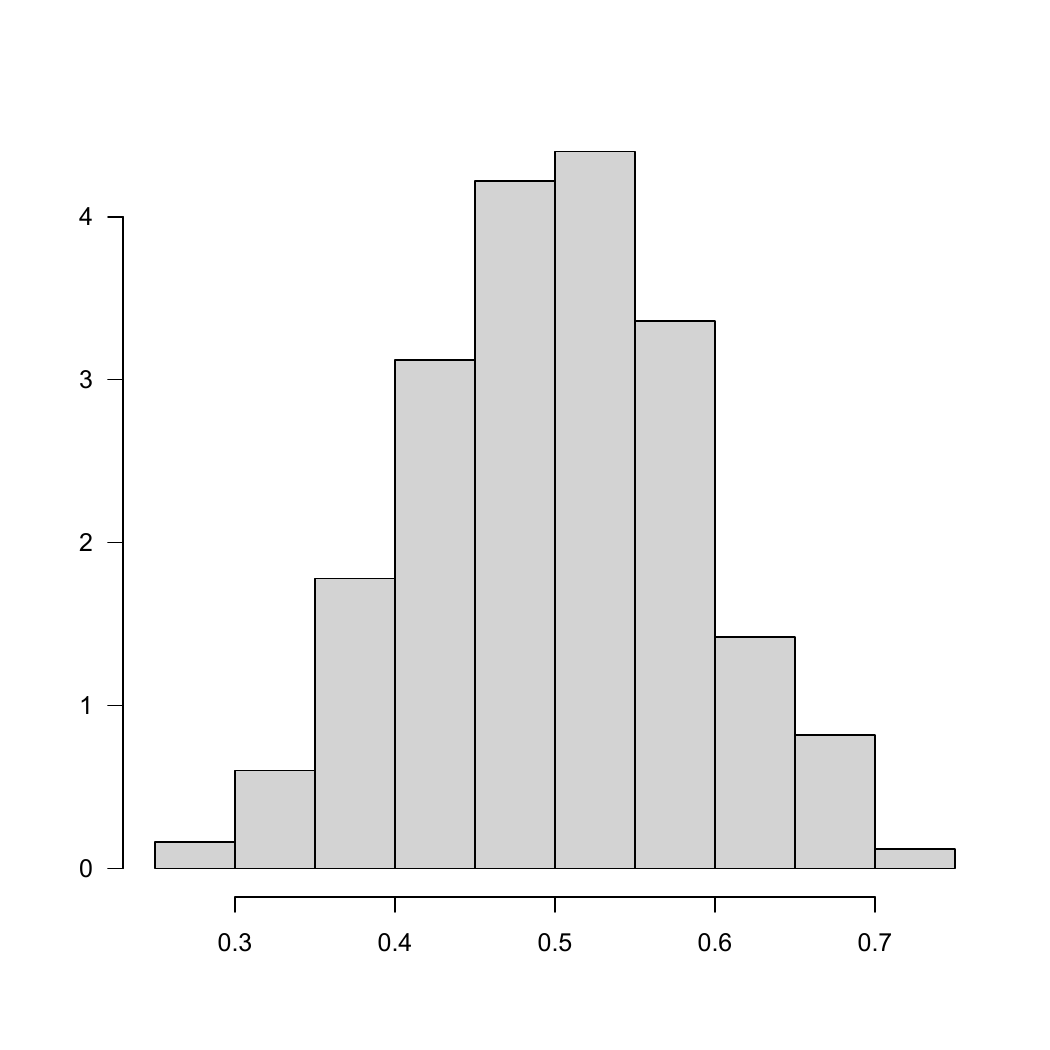}
\caption{Histogram of estimate of $\Pi\{n^{1/3}\bigl\{f_n^*(t_0)-\hat f_n(t_0)\bigr\}\le0\bigm|D_n\}$, for varying $D_n$ of size $n=2000$ and $t_0=0.5$. The estimates are based on relative frequencies in $1000$ draws.}
\label{figure:posterior_df}
\end{figure}

Theorem \ref{theorem:limit_posterior} is the consequence of the following two lemmas.

\begin{lemma}
\label{lemma:localBM_convergence_theta}
Let $W$ be standard two-sided Brownian motion on $\R$, originating from zero. Let  $D(\R)$  be  the space of right continuous functions, with left limits (cadlag functions) on $\R$, equipped with the metric of uniform convergence on compact sets, and let $t_0\in(0,1)$.  Let $\widetilde W_n^*$ be defined by (\ref{credible_process}).
Then, along almost all sequences $(X_1,Y_1),(X_2,Y_2),\dots$, the process $\widetilde W_n^*$ defined by (\ref{credible_process}) converges in $D(\R)$ in distribution conditionally to the process $V$, defined by
\begin{equation}
\label{V_n_limit}
V(t)=\s_0\sqrt{g(t_0)}\,W(t),\,t\in\R.
\end{equation}
Here $W$ is standard two-sided Brownian motion, originating from zero.
\end{lemma}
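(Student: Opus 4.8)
The plan is to exploit the fact that, conditionally on the data $D_n$, the process $\widetilde W_n^*$ of (\ref{credible_process}) is an \emph{exactly} Gaussian process with independent increments, so that its conditional law is completely determined by its conditional covariance function. Indeed, by (\ref{distribution_theta}) the variables $Y_j^*-\bar Y_j$ are, given $D_n$, independent $N(0,\hat\s_n^2/N_j)$, so each increment of (\ref{credible_process}) over a block of indices $j$ is centered normal, and increments over disjoint index sets are independent. Consequently, to establish conditional convergence in $D(\R)$ to the Gaussian limit $V$ of (\ref{V_n_limit}) along almost all data sequences, it suffices to prove (i) almost sure convergence of the conditional covariance function to that of $V$ (giving the finite-dimensional distributions), and (ii) tightness.

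First I would compute the conditional covariance. The increments are $n^{-1/3}N_j(Y_j^*-\bar Y_j)$, each of conditional variance $n^{-2/3}N_j\hat\s_n^2$, so for $s,t>0$,
\[
\mathrm{Cov}\bigl(\widetilde W_n^*(s),\widetilde W_n^*(t)\mid D_n\bigr)
= n^{-2/3}\,\hat\s_n^2\!\!\sum_{j:\,j/J\in(t_0,\,t_0+n^{-1/3}(s\wedge t)]}\!\!N_j,
\]
with an analogous (and, by disjointness of index sets, independent) expression for $s,t<0$. Since $V$ has covariance $\s_0^2 g(t_0)(s\wedge t)$, the crux is the almost sure limit
\[
n^{-2/3}\!\!\sum_{j:\,j/J\in(t_0,\,t_0+n^{-1/3}u]}\!\!N_j\ \longrightarrow\ g(t_0)\,u,
\qquad u>0,
\]
together with $\hat\s_n^2\to\s_0^2$ a.s. The sum equals $n$ times the $\G_n$-measure of a grid-snapped interval $(a_n,b_n]$ of length $\asymp n^{-1/3}u$ (the snapping error $1/J=o(n^{-1/3})$ being negligible against the length). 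Decomposing this into its mean $n\{G(b_n)-G(a_n)\}=n\,g(t_0)\,n^{-1/3}u+o(n^{2/3})$, where $G(x)=\int_0^x g$, and the centered empirical fluctuation, the mean term divided by $n^{2/3}$ converges to $g(t_0)u$, while the fluctuation has conditional standard deviation of order $\sqrt{n\cdot n^{-1/3}}=n^{1/3}=o(n^{2/3})$ and hence vanishes after the normalisation. Convergence $\hat\s_n^2\to\s_0^2$ follows from (\ref{eq:empbay}): its first term is the within-block residual sum of squares, which converges to $\s_0^2$ because the oscillation of $f_0$ on each $I_j$ is $O(1/J)\to0$, while the second term is negligible.

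To upgrade the in-probability statement for the fluctuation term to almost sure convergence, uniformly for $u$ in a compact set, I would represent the centered partial sums $\sum_j N_j-n\{G(b_n)-G(a_n)\}$ as a martingale in the index $j$ (equivalently, view $n\{\G_n-G\}$ as an empirical process indexed by the shrinking intervals), apply a maximal inequality of Doob/Bernstein type, and conclude via Borel--Cantelli along the data sequence. This is precisely where the ``martingale method'' enters, and it is the main technical obstacle: one must secure almost sure control that is uniform in the endpoint $u$ over compacts while the intervals contract at rate $n^{-1/3}$.

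Finally, for tightness I would invoke an invariance principle for triangular arrays of conditionally independent Gaussian increments. The Lindeberg/Lyapunov condition is immediate, because each increment has conditional variance $n^{-2/3}N_j\hat\s_n^2=O(n^{1/3}/J)$, which tends to zero by the lower bound $J\gg n^{1/3}$ in (\ref{bounds_for_J}); hence the maximal jump is asymptotically negligible. Equivalently, tightness in $D(\R)$ follows from the Gaussian fourth-moment bound $\mathbb E\bigl[\,|\widetilde W_n^*(t)-\widetilde W_n^*(s)|^4\mid D_n\bigr]\le C\,(t-s)^2$ implied by the covariance estimate above. Combining (i) and (ii) yields conditional convergence of $\widetilde W_n^*$ to $V$ in $D(\R)$ along almost all sequences, as claimed.
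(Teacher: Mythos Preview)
Your proposal is correct, but it takes a different route from the paper. The paper does not exploit the exact conditional Gaussianity of $\widetilde W_n^*$; instead it views $t\mapsto\widetilde W_n^*(t)$ as a martingale (in $t$) with respect to the filtration generated by the successive blocks, computes the quadratic variation $[\widetilde W_n^*](t)=n^{-2/3}\sum_{j:j/J\in(t_0,t_0+n^{-1/3}t]}N_j^2(Y_j^*-\bar Y_j)^2\sim n^{-2/3}\hat\s_n^2\sum_j N_j\to\s_0^2 g(t_0)t$, and then invokes Rebolledo's martingale functional CLT to obtain both finite-dimensional convergence and tightness in one stroke. Your argument is more elementary for this particular lemma: since the increments are genuinely Gaussian given $D_n$, covariance convergence plus a Gaussian moment bound for tightness is all that is needed, and you avoid appealing to a general FCLT. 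The trade-off is that the paper's martingale/Rebolledo approach is reusable verbatim for the companion lemmas (the process $\widetilde W_n$ of (\ref{2nd_martingale}), the classical-bootstrap process of Section~\ref{subsec:classic_bootstrap}, and the residual-bootstrap process of Lemma~\ref{lemma:localBM_convergence}), where the increments are \emph{not} Gaussian and your shortcut would not apply. Both approaches ultimately rest on the same almost-sure limit $n^{-2/3}\sum_{j:j/J\in(t_0,t_0+n^{-1/3}u]}N_j\to g(t_0)u$, which you justify more carefully than the paper does.
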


\begin{proof}
 We consider the case $t\ge0$. It is clear that $t\mapsto \widetilde W_n(t)$ is a martingale with respect to the family of $\s$-algebras $ {\cal F}^*_{n,t},\,t\ge0$, defined by:
 \begin{align*}
 {\cal F}^*_{n,t}=\s\left\{(j/J,\bar y_j):j/J\in(t_0,t_0+n^{-1/3}t]\right\}.\qquad t\ge0.
 \end{align*}
 The quadratic variation process is, for $t\ge0$  given by:
\begin{align*}
 &\left[\widetilde W_n^*\right](t)=n^{-2/3}\sum_{j:j/J\in(t_0,t_0+n^{-1/3}t]}N_j^2\left\{\th_j^*-\bar y_j\right\}^2
 \sim n^{-2/3}\sum_{j:j/J\in(t_0,t_0+n^{-1/3}t]}N_j\s_0^2\,.
\end{align*}
If, for example as in (\cite{moumita:21}), $J\sim n^{1/3}\log n$, we get:
\begin{align*}
N_j\sim n^{2/3}g(t_0)/\log n,
\end{align*}
and
\begin{align*}
n^{-2/3}\sum_{j:j/J\in(t_0,t_0+n^{-1/3}t]}N_j\s_0^2\sim (\log n)^{-1} \sum_{j:j/J\in(t_0,t_0+n^{-1/3}t]}g(t_0)\s_0^2\sim \s_0^2 g(t_0)\,t.
\end{align*}
The case $t<0$ is treated similarly. The result now follows from Rebolledo's theorem, see Theorem 3.6, p.\ 68 of \cite{piet_geurt:14} and \cite{rebolledo:80}. 
 \end{proof}

\begin{lemma}
\label{lemma:localBM_convergence_y_bar}
Let $W$, $t_0$ and $V$ be as defined in Lemma \ref{lemma:localBM_convergence_theta} and $\widetilde W_n$  by (\ref{2nd_martingale}).
Then the process $\widetilde W_n$ converges in $D(\R)$ in distribution, conditionally on the sequence $X_1,X_2,\dots$, to the process $V$.
\end{lemma}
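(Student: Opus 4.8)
\emph{Proof sketch.} The plan is to exploit the martingale structure of $\widetilde W_n$, exactly as in the proof of Lemma \ref{lemma:localBM_convergence_theta}, and to conclude again by Rebolledo's theorem; the only genuinely new ingredient is the control of the jumps, since here the increments are built from the original (merely sub-Gaussian) errors rather than from Gaussian draws. First I would simplify the increments. Because $\G_n$ is the empirical distribution function of the $X_i$, we have $\frac{n}{N_j}\int_{I_j}f_0(u)\,d\G_n(u)=\frac1{N_j}\sum_{X_i\in I_j}f_0(X_i)$, and since $Y_i=f_0(X_i)+\epsilon_i$,
\begin{align*}
1_{\{N_j>0\}}N_j\Bigl\{\bar Y_j-\frac{n}{N_j}\int_{I_j}f_0(u)\,d\G_n(u)\Bigr\}=1_{\{N_j>0\}}\sum_{X_i\in I_j}\epsilon_i.
\end{align*}
Hence, for $t\ge0$,
\begin{align*}
\widetilde W_n(t)=n^{-1/3}\sum_{j:j/J\in(t_0,t_0+n^{-1/3}t]}1_{\{N_j>0\}}\sum_{X_i\in I_j}\epsilon_i,
\end{align*}
with the analogous expression for $t<0$. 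Conditionally on the sequence $X_1,X_2,\dots$, this is a mean-zero martingale in $t$ with respect to the filtration ${\cal F}_{n,t}=\s\bigl\{\epsilon_i:X_i\in I_j,\ j/J\le t_0+n^{-1/3}t\bigr\}$, because $E[\epsilon_i]=0$ and the $\epsilon_i$ are independent of the $X_i$.

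Next I would compute the predictable quadratic variation. The predictable contribution of interval $I_j$ equals $n^{-2/3}N_j\s_0^2$, so
\begin{align*}
\langle\widetilde W_n\rangle(t)=n^{-2/3}\sum_{j:j/J\in(t_0,t_0+n^{-1/3}t]}N_j\s_0^2.
\end{align*}
This is precisely the expression analysed in the proof of Lemma \ref{lemma:localBM_convergence_theta}. Along almost every sequence $(X_i)$ one has $N_j\sim n^{2/3}g(t_0)/\log n$ by the law of large numbers, using that $g$ is bounded away from $0$ near $t_0$ (so that in particular $1_{\{N_j>0\}}=1$ eventually throughout the relevant window), whence $\langle\widetilde W_n\rangle(t)\to\s_0^2g(t_0)\,t$, which matches the quadratic variation of the limit process $V(t)=\s_0\sqrt{g(t_0)}\,W(t)$.

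The part requiring genuine work is the jump (Lindeberg) condition of Rebolledo's theorem, which in Lemma \ref{lemma:localBM_convergence_theta} was immediate from Gaussianity. Each increment $\Delta_j=n^{-1/3}\sum_{X_i\in I_j}\epsilon_i$ is a sum of $N_j\sim n^{2/3}/\log n$ independent sub-Gaussian variables, hence itself sub-Gaussian with variance proxy of order $n^{-2/3}N_j\s_0^2\sim\s_0^2/\log n\to0$; in particular each individual jump is of order $(\log n)^{-1/2}$. I would use the sub-Gaussian tail bound to show that, for every fixed $\delta>0$, $\P(|\Delta_j|>\delta)\le 2n^{-c\delta^2}$ for some $c>0$, so that $E\bigl[\Delta_j^21_{\{|\Delta_j|>\delta\}}\bigr]$ decays at a polynomial rate in $n$; summing over the $O(\log n)$ intervals meeting the window $(t_0,t_0+n^{-1/3}t]$ then gives $\sum_j E\bigl[\Delta_j^21_{\{|\Delta_j|>\delta\}}\mid{\cal F}_{n,\cdot}\bigr]\to0$. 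With the predictable quadratic variation converging to the deterministic function $\s_0^2g(t_0)\,t$ and the jumps thus negligible, Rebolledo's theorem (Theorem 3.6, p.\ 68 of \cite{piet_geurt:14} and \cite{rebolledo:80}) delivers convergence of $\widetilde W_n$ in $D(\R)$ to the continuous Gaussian martingale $V$; the case $t<0$ is handled symmetrically.

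The main obstacle is precisely this Lindeberg verification. Because only $\sim\log n$ intervals fall in the local window while each increment is of size $\sim(\log n)^{-1/2}$, the smallness of the individual jumps is only marginal, and one must lean on the sub-Gaussian concentration of the interval sums to rule out the contribution of atypically large $\Delta_j$; this is the one place where the argument genuinely differs from, and is harder than, the Gaussian computation in Lemma \ref{lemma:localBM_convergence_theta}.
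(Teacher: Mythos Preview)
Your proposal is correct and follows exactly the route the paper indicates: the paper's proof of this lemma is a single sentence saying it is ``proved in the same way as Lemma \ref{lemma:localBM_convergence_theta}, using that (\ref{2nd_martingale}) is a martingale,'' and you carry this out by exhibiting the martingale structure, matching the predictable quadratic variation to $\s_0^2g(t_0)t$, and invoking Rebolledo's theorem. Your additional work on the Lindeberg condition via the sub-Gaussian tails of the interval sums is precisely the detail the paper suppresses; it is needed (since the $\epsilon_i$ are only sub-Gaussian here, unlike the Gaussian draws in Lemma \ref{lemma:localBM_convergence_theta}) and your argument for it is sound.
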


\begin{proof}[Proof of Lemma \ref{lemma:localBM_convergence_y_bar}]
This is proved in the same way as Lemma \ref{lemma:localBM_convergence_theta}, using 
that (\ref{2nd_martingale}) is a martingale.
\end{proof}

\begin{proof}[Proof of Theorem \ref{theorem:limit_posterior}]
We use the ``switch relation'' (see, e.g., Section 3.8 of \cite{piet_geurt:14} and section 5.1 of \cite{GrWe:92}; the terminology is due to Iain Johnstone to denote a construction introduced in a course given by the first author in Stanford, 1990). The bootstrap estimate $f_n^*$ is computed as left derivative of the greatest convex minorant of cumulative sum diagram (\ref{cusum:percentile}). Let the processes $G_n$ and $V_n^*$ be defined by
\begin{align*}
G_n(t)=\sum_{j/J\le t}N_j/n,\qquad V_n^*(t)=\sum_{j/J\le t}N_jY_j^*/n,\qquad t\in[0,1].
\end{align*}
Moreover, let $U_n^*$ be defined by
\begin{align*}
U_n^*(a)=\text{argmin}\{t\in[0,1]:V_n^*(t)-a G_n(t)\},
\end{align*}
for $a$ in the range of $f_0$.
Then we have the ``switch relation'':
\begin{align*}
\hat f_n^*(t)\ge a \iff G_n(t)\ge G_n(U_n^*(a)) \iff t\ge U_n^*(a),
\end{align*}
(compare with (3.35), p.\ 69 of \cite{piet_geurt:14}). So we get if $a=f_0(t_0)$,
\begin{align*}
&\P\left\{n^{1/3}\{\hat f_n^*(t_0)-f_0(t_0)\}\ge x|D_n\right\}=
\P\left\{\hat f_n^*(t_0)\ge a+n^{1/3}x|D_n\right\}\\
&=\P\left\{U_n^*(a+n^{-1/3}x)\le t_0|D_n\right\}=\P\left\{n^{1/3}\bigl\{U_n^*(a+n^{-1/3}x)-t_0\bigr\}\le0|D_n\right\}\nonumber\\
&=\P\left\{\text{argmin}\left[t\in[0,1]:V_n^*(t)-(a+n^{-1/3}x)\,G_n(t)\right]\le 0|D_n\right\}\\
&=\P\left\{\text{argmin}\left[t\in[0,1]:V_n^*(t)-V_n^*(t_0)-(a+n^{-1/3}x)\{G_n(t)-G_n(t_0)\}\right]\le 0|D_n\right\},
\end{align*}
where the last equality holds since the values of the argmin function do not change if we add constants to the function for which we determine the argmin.

By Lemmas \ref{lemma:localBM_convergence_theta} and \ref{lemma:localBM_convergence_y_bar} we get the local expansion:
\begin{align*}
&\P\left\{n^{1/3}\bigl\{\hat f_n^*(t_0)-f_0(t_0)\bigr\}\ge x\Bigm|D_n\right\}\\
&\sim\P\left[\text{argmin}_t\left[\tilde W_n^*(t)+\tilde W_n(t)+\tfrac12 f_0'(t_0) g(t_0)t^2-x g(t_0)\right]\le 0\bigm|D_n\right],
\end{align*}
which (using Brownian scaling) converges in distribution to
\begin{align*}
&\P\left[\left(\frac{4\s_0^2f_0'(t_0)}{g(t_0)}\right)^{1/3}\text{\rm argmin}_{t\in\R}\left[W_1(t)+W_2(t)+t^2\right]\le -x\Bigm| W_1\right]\\
&=\P\left[\left(\frac{4\s_0^2f_0'(t_0)}{g(t_0)}\right)^{1/3}\text{\rm argmin}_{t\in\R}\left[W_1(t)+W_2(t)+t^2\right]\ge x\Bigm| W_1\right].
\end{align*}
The last line uses the type of symmetry used in the proof of Theorem 5.2 of \cite{GrWe:92}.
\end{proof}

\vspace{0.5cm}
In the proof of Theorem \ref{theorem:limit_posterior} we use the tightness of $n^{1/3}\{U_n^*(a+n^{-1/3}x)-t_0\}$, which can be proved along entirely similar lines as the proof of Lemma 3.5 in \cite{piet_geurt:14}.

\subsection{Convergence of a classical percentile bootstrap}
\label{subsec:classic_bootstrap}
It is of interest to investigate what happens if we perform a classical empirical bootstrap, where we resample with replacement from the pairs $(X_i,Y_i)$. This situation, where we also treat the $X_i$ as random from the start instead of keeping them fixed, is called the ``correlation model'' in \cite{hall:92book}. In this case we compute the local means
\begin{align}
\label{theta_correlation}
\bar Y^*_j=(N_j^*)^{-1}\sum_{X_i^*\in I_j}Y_i^*,\qquad I_j=((j-1)/J,j/J],\qquad N_j^*=\#\{i:X_i^*\in I_j\},
\end{align}
where the $(X_i^*,Y_i^*)$ are (discretely) uniformly (re-)sampled with replacement from the set $D_n=\{(X_1,Y_i),\dots,(X_n,Y_n)\}$. If $N_j^*=0$ we define $\bar Y_j^*=0$, these values play no role in the isotonization step.

Note that we can write alternatively, if $N_j^*>0$:
\begin{align}
\label{theta_correlation2}
\bar Y_j^*=(N_j^*)^{-1}\sum_{X_i\in I_j}M_{in}^*Y_i,\qquad N_j^*=\sum_{i:X_i\in I_j}M_{in}^*,
\end{align}
where
\begin{align*}
\left(M_{1n}^*,\dots,M_{nn}^*\right)\sim\text{Multinomial}\left(n;n^{-1},\dots,n^{-1}\right).
\end{align*}
This means that
\begin{align*}
\E\left\{N_j^*\bar Y_j^*\bigm|(X_1,Y_1),\dots,(X_1,Y_n)\right\}=N_j\bar Y_j,\qquad j=1,\dots,J.
\end{align*}

The points of the cusum diagram needed to compute the bootstrap realization of the LSE are given by
\begin{align*}
	P_j^*=\left(\sum_{i=1}^j N_i^*,\sum_{i=1}^j N_i^*\bar Y_i^*\right),\qquad j=1,\dots,J.
\end{align*}

In order to study the local asymptotics of the greatest convex minorant of this diagram, we consider the process
\begin{align}
\label{classic_W_n^*}
\widetilde W_n^*(t)&=n^{-1/3}\Biggl\{\sum_{j:j/J\in[0,t_0+n^{-1/3}t]}\sum_{X_i\in I_j}(M_{in}^*-1)(Y_i-a_0)\nonumber\\
&\qquad\qquad\qquad\qquad\qquad\qquad-\sum_{j:j/J\in[0,t_0]}\sum_{X_i\in I_j}(M_{in}^*-1)(Y_i-a_0)\Biggr\},
\end{align}
where $a_0=f_0(t_0)$, and
\begin{align*}
\widetilde W_n(t)&=n^{-1/3}\Biggl\{\sum_{j:j/J\in[0,t_0+n^{-1/3}t]}\sum_{X_i\in I_j}(Y_i-a_0)
-\sum_{j:j/J\in[0,t_0]}\sum_{X_i\in I_j}(Y_i-a_0)\Biggr\}.
\end{align*}

Defining 
\begin{align*}
U_n(a)=\text{argmin}\Bigl\{t\in[0,1]:n^{-1}\sum_{j:j/J\le t} \bigl\{N_j^*\bar Y_j^*-a\,N_j^*\bigr\}\Bigr\},
\end{align*}
results analogous to Lemmas \ref{lemma:localBM_convergence_theta} and \ref{lemma:localBM_convergence_y_bar} hold. For example we get Lemma \ref{lemma:localBM_convergence_classic} (the analogue to Lemma \ref{lemma:localBM_convergence_theta}) which is proved in the Appendix.

\begin{lemma}
\label{lemma:localBM_convergence_classic}
Let $W$, $t_0$ and $V$ be as defined in Lemma \ref{lemma:localBM_convergence_theta} and $\widetilde W_n^*$ be defined by (\ref{classic_W_n^*}).
Then, along almost all sequences $(X_1,Y_1),(X_2,Y_2),\dots$, the process $\widetilde W_n^*$ converges in $D(\R)$ in distribution conditionally to the process $V$
\end{lemma}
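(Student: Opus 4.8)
The plan is to reproduce the martingale central limit argument used for Lemma~\ref{lemma:localBM_convergence_theta}: working conditionally on the data $(X_1,Y_1),(X_2,Y_2),\dots$, I would exhibit $\widetilde W_n^*$ (for $t\ge0$) as a martingale in $t$ for the spatial filtration generated by the intervals $I_j$ with $j/J\in(t_0,t_0+n^{-1/3}t]$, compute its predictable quadratic variation, verify a jump (Lindeberg) condition, and invoke Rebolledo's theorem (Theorem~3.6 of \cite{piet_geurt:14} and \cite{rebolledo:80}). Writing $\xi_i=Y_i-a_0$, the increment contributed by $I_j$ in (\ref{classic_W_n^*}) is $\sum_{X_i\in I_j}(M_{in}^*-1)\xi_i$. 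The obstacle that is absent in Lemma~\ref{lemma:localBM_convergence_theta} is that the multinomial weights $M_{in}^*$ are \emph{not} independent across intervals: the constraint $\sum_i M_{in}^*=n$ makes them negatively correlated, so $\E\{M_{in}^*-1\mid I_1,\dots,I_{j-1}\}\neq0$ and $\widetilde W_n^*$ is not literally a martingale for this filtration.

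The device I would use to bypass this is Poissonization: replace $(M_{1n}^*,\dots,M_{nn}^*)$ by independent $\mathrm{Poisson}(1)$ weights $\tilde M_{in}$ and let $\widetilde W_n^{\mathrm{Poi}}$ be the resulting process. Over the \emph{disjoint} intervals $I_j$ the increments $n^{-1/3}\sum_{X_i\in I_j}(\tilde M_{in}-1)\xi_i$ are then independent and centered given the data, so $\widetilde W_n^{\mathrm{Poi}}$ is a genuine martingale for the spatial filtration and the quadratic-variation computation runs exactly as in Lemma~\ref{lemma:localBM_convergence_theta}. Its predictable quadratic variation is $n^{-2/3}\sum_{j:j/J\in(t_0,t_0+n^{-1/3}t]}\sum_{X_i\in I_j}\xi_i^2$; since $\E\{\xi_i^2\mid X_i\}=(f_0(X_i)-a_0)^2+\s_0^2\to\s_0^2$ as $X_i\to t_0$, the strong law applied to the i.i.d.\ pairs gives, along almost all data sequences, $\sum_{X_i\in I_j}\xi_i^2\sim N_j\s_0^2$ and hence $n^{-2/3}\sum_j N_j\s_0^2\to\s_0^2 g(t_0)\,t$, which is precisely the quadratic variation of $V(t)=\s_0\sqrt{g(t_0)}\,W(t)$. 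The jump condition holds because a single summand is $O(n^{-1/3})\,\tilde M_{in}|\xi_i|$, and $\max_i\tilde M_{in}$ and $\max_{X_i\in\text{window}}|\xi_i|$ grow only polylogarithmically, so the maximal jump is $o(1)$; Rebolledo's theorem then yields $\widetilde W_n^{\mathrm{Poi}}\stackrel{\mathcal D}\to V$ conditionally, for almost all data sequences. The case $t<0$ is identical, giving the two-sided limit.

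The remaining, and main, step is to transfer this back to the multinomial weights, i.e.\ to show that conditioning $(\tilde M_{in})$ on $\sum_i\tilde M_{in}=n$ perturbs $\widetilde W_n^*$ negligibly on the local scale; the heuristic is that the global sum constraint is invisible to a window carrying only an $O(n^{-1/3})$ fraction of the $n$ observations. Quantitatively, under the multinomial law $\mathrm{Cov}(M_{in}^*,M_{i'n}^*\mid D_n)=-1/n$ for $i\neq i'$, so the cross-covariance contribution to $\mathrm{Var}\{\widetilde W_n^*(t)\mid D_n\}$ equals $-n^{-2/3}n^{-1}\bigl\{(\sum_{X_i\in\text{window}}\xi_i)^2-\sum_{X_i\in\text{window}}\xi_i^2\bigr\}$; as the window carries $O(n^{2/3})$ points, $\sum_{\text{window}}\xi_i=O_p(n^{1/3})$ and this contribution is $O_p(n^{-1})\to0$, while the diagonal term reproduces the Poisson quadratic variation up to the factor $\mathrm{Var}(M_{in}^*)=1-n^{-1}\to1$. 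I would therefore de-Poissonize, showing that the discrepancy $\widetilde W_n^*-\widetilde W_n^{\mathrm{Poi}}$ tends to zero uniformly on compacta in conditional probability along almost all data sequences, which with the Poissonized limit gives the claim. The most delicate point is to make this uniform in $t$ on compact sets, so as to obtain convergence in $D(\R)$ and not merely of the finite-dimensional distributions; I expect the real work to lie here, and it can be controlled by a maximal inequality for the discrepancy process together with the $O_p(n^{-1})$ bound above.
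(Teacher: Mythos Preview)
Your overall strategy---Poissonize the multinomial weights, apply Rebolledo's theorem to the resulting genuine martingale, then transfer back---is exactly the route the paper takes, and your quadratic-variation computation for the Poissonized process agrees with the paper's.

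The difference lies in the de-Poissonization step, and here the paper's argument is both simpler and more complete than what you sketch. You propose to control the discrepancy $\widetilde W_n^*-\widetilde W_n^{\mathrm{Poi}}$ via second-moment computations and then a maximal inequality; but as written this is not yet well-posed, since the multinomial and Poisson weights live on different probability spaces unless you construct a coupling, and your covariance bound only compares the \emph{marginal} variances of $\widetilde W_n^*(t)$ and $\widetilde W_n^{\mathrm{Poi}}(t)$, not the variance of their difference. The paper instead restricts attention to the local window $A_n=[t_0-n^{-1/3}\log n,\,t_0+n^{-1/3}\log n]$, represents the bootstrap draws falling in $A_n$ as $\sum_{i=1}^{M_n^*}\delta_{(U_i^*,V_i^*)}$ with $M_n^*\sim\mathrm{Binom}(n,M_n/n)$ and i.i.d.\ $(U_i^*,V_i^*)$ uniform on the data points in $A_n$, and couples this in total variation with the Poisson process $\sum_{i=1}^{N_n}\delta_{(U_i^*,V_i^*)}$, $N_n\sim\mathrm{Poisson}(M_n)$, via the construction in Lemma~2.2 of \cite{piet:88} and an inequality of \cite{vervaat:69}, obtaining
\[
\P\Bigl\{\sum_{i=1}^{M_n^*}\delta_{(U_i^*,V_i^*)}\ne\sum_{i=1}^{N_n}\delta_{(U_i^*,V_i^*)}\Bigr\}\le 2M_n/n\to0.
\]
On the complementary event the two processes are \emph{identical} on the entire window, so uniform convergence on compacta transfers for free and the maximal-inequality work you anticipated is unnecessary. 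This coupling is the one missing ingredient in your proposal; once you insert it, the rest of your argument coincides with the paper's.
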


So we get the same behavior as in subsection \ref{subsec:percentile_regression}, but the present approach has the interesting feature that we do not have to estimate the variance of the errors separately. We can just resample with replacement from the original sample $(X_1,Y_1),\dots,(X_n,Y_n)$ and compute the estimator $\hat f_n^*$ in the bootstrap samples.

The simulations, based on the regression function $f(x)=x^2+x/5$ with normal noise with expectation $0$ and variance $0.01$ show almost no difference beween the three methods if $n=20,000$, see Figure \ref{figure:CI_percentages}. At smaller sample size, like, e.g., $n=1000$, the overcoverage is still not reached, as can be seen in Figure \ref{figure:CI_percentages2}. So the phenomenon of overcoverage also here only seems to occur with very large sample sizes.
	
\begin{figure}[!ht]
\begin{subfigure}[b]{0.3\textwidth}
\includegraphics[width=\textwidth]{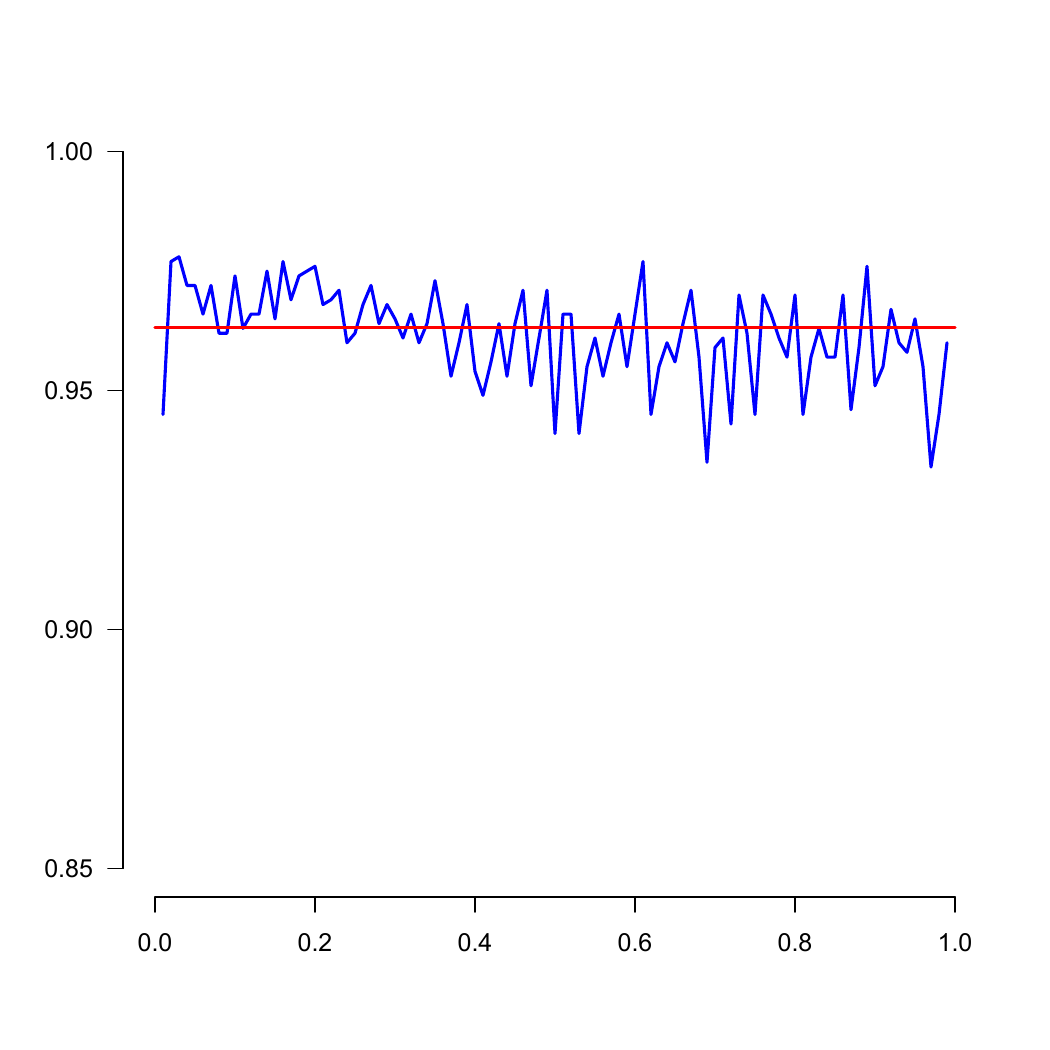}
\caption{}
\label{fig:percentages_credible20,000}
\end{subfigure}
\begin{subfigure}[b]{0.3\textwidth}
\includegraphics[width=\textwidth]{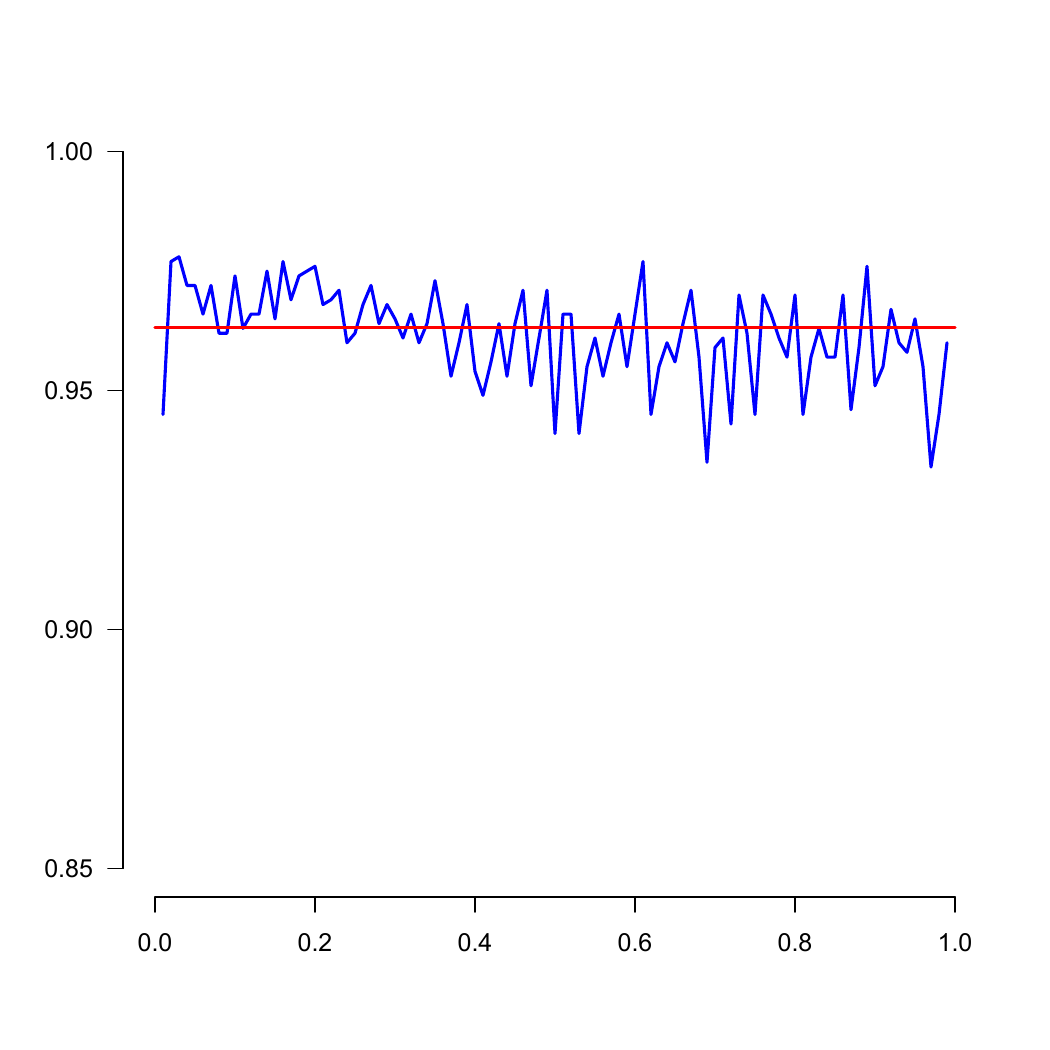}
\caption{}
\label{fig:percentages_percentile_regression20,000}
\end{subfigure}
\begin{subfigure}[b]{0.3\textwidth}
\includegraphics[width=\textwidth]{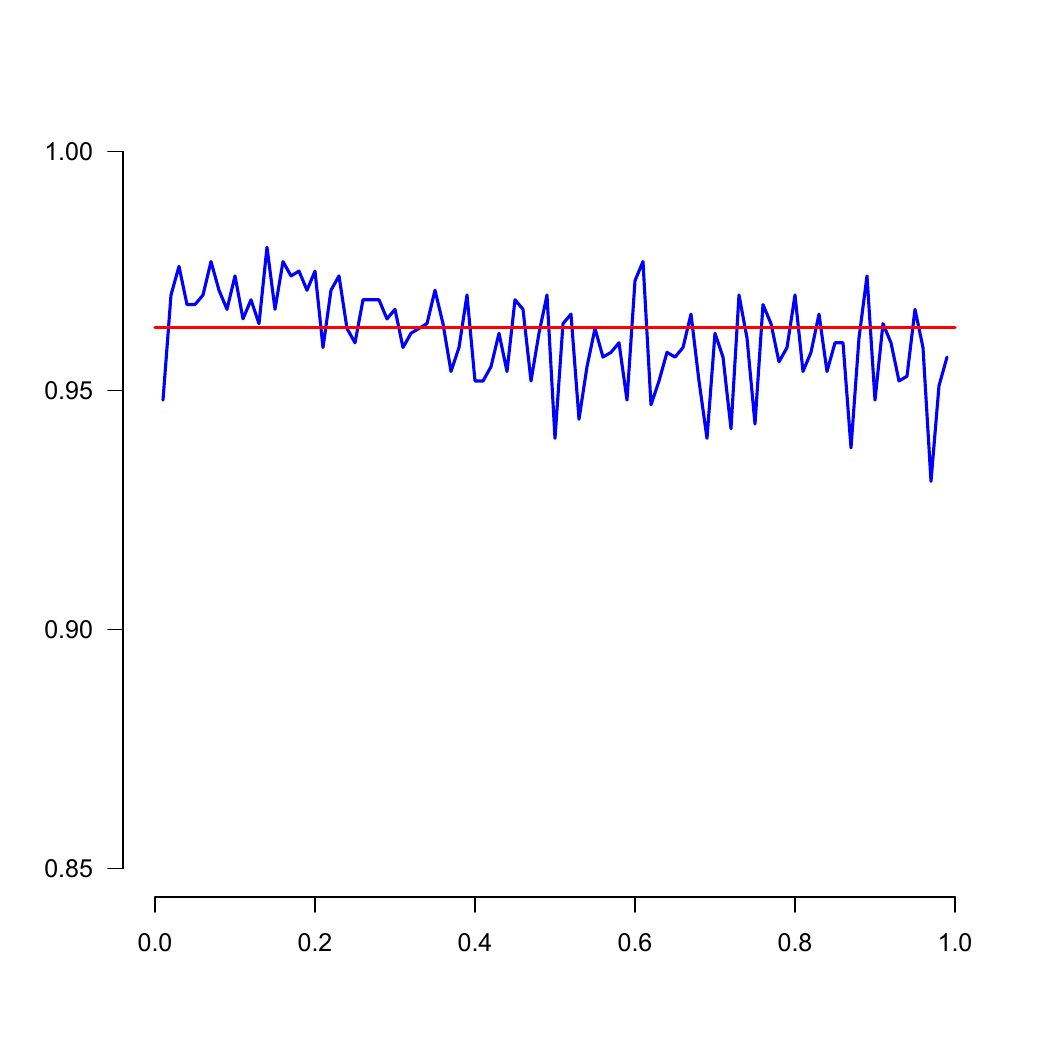}
\caption{}
\label{fig:percentages_percentile20,000}
\end{subfigure}
\caption{Coverage percentages for credible intervals and percentile confidence intervals, for $n=20,000$.  (a) credible intervals, (b) percentile confidence intervals of section \ref{subsec:percentile_regression}, (c) percentile confidence intervals of section \ref{subsec:classic_bootstrap}. The red line is at level 0.96324, but the intervals are based on the $0.025$ and $0.975$ quantiles of the credible, respectively percentile bootstrap simulations. The level 0.96324 was determined from the values of the function $A$ in \cite{moumita:21}.}
\label{figure:CI_percentages}
\end{figure}

\begin{figure}[!ht]
\begin{subfigure}[b]{0.3\textwidth}
\includegraphics[width=\textwidth]{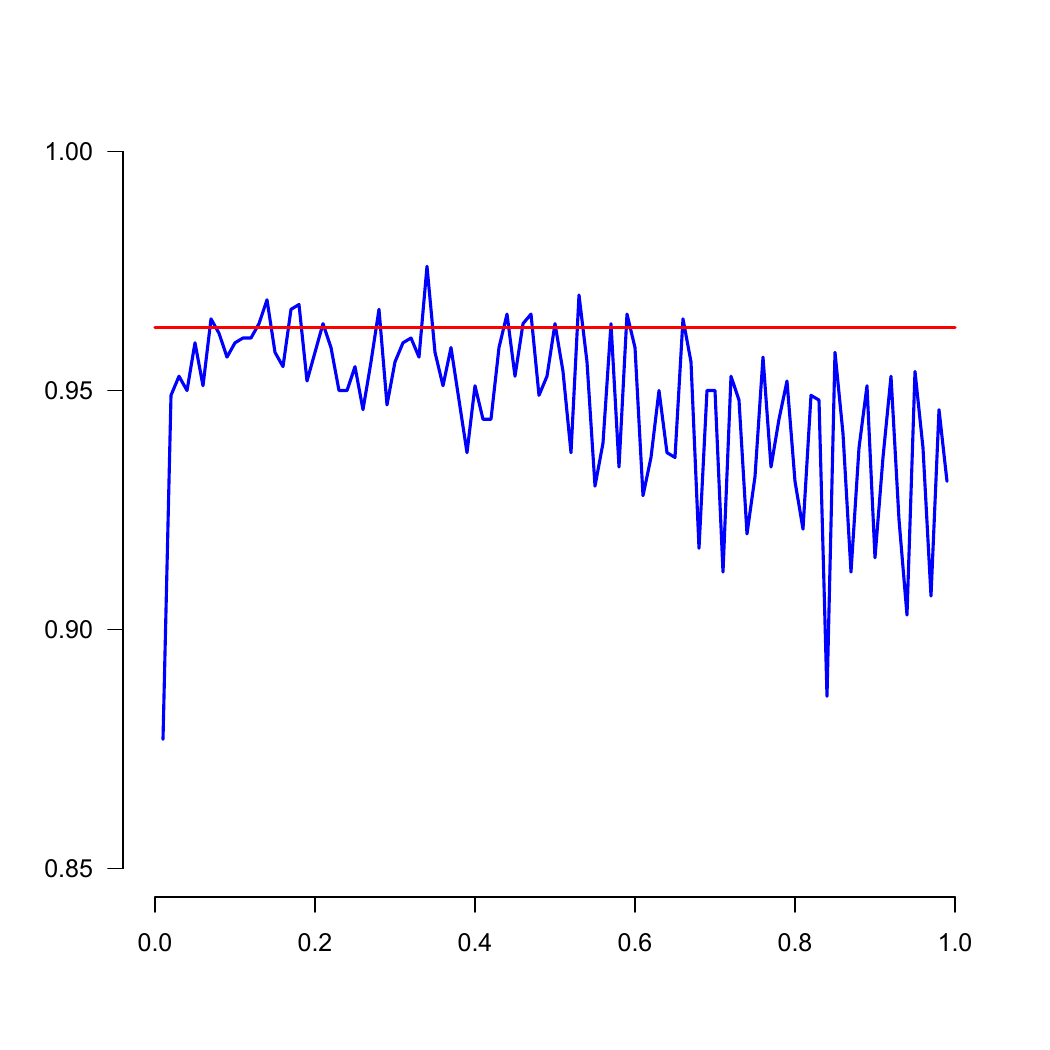}
\caption{}
\label{fig:percentage_credible1000}
\end{subfigure}
\begin{subfigure}[b]{0.3\textwidth}
\includegraphics[width=\textwidth]{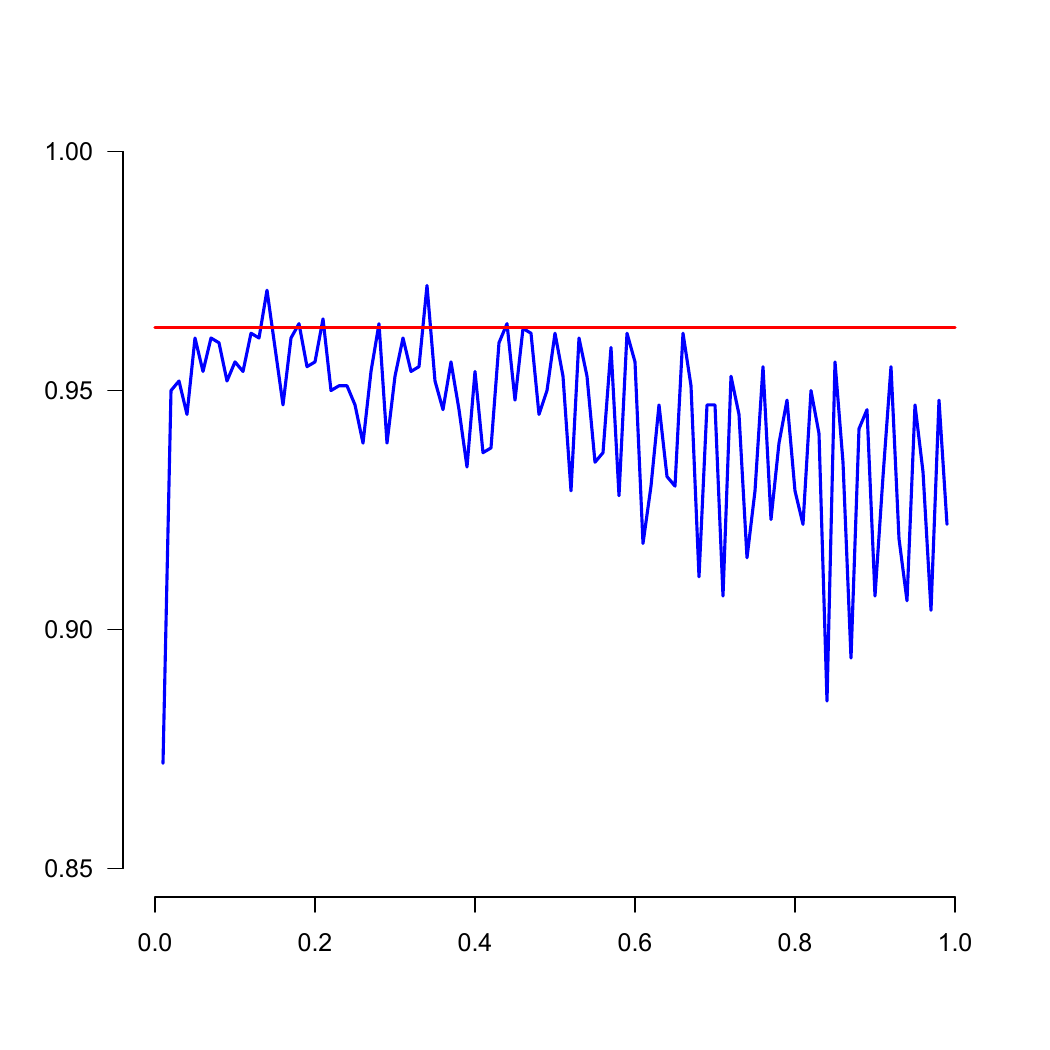}
\caption{}
\label{fig:percentage_percentile_regression1000}
\end{subfigure}
\begin{subfigure}[b]{0.3\textwidth}
\includegraphics[width=\textwidth]{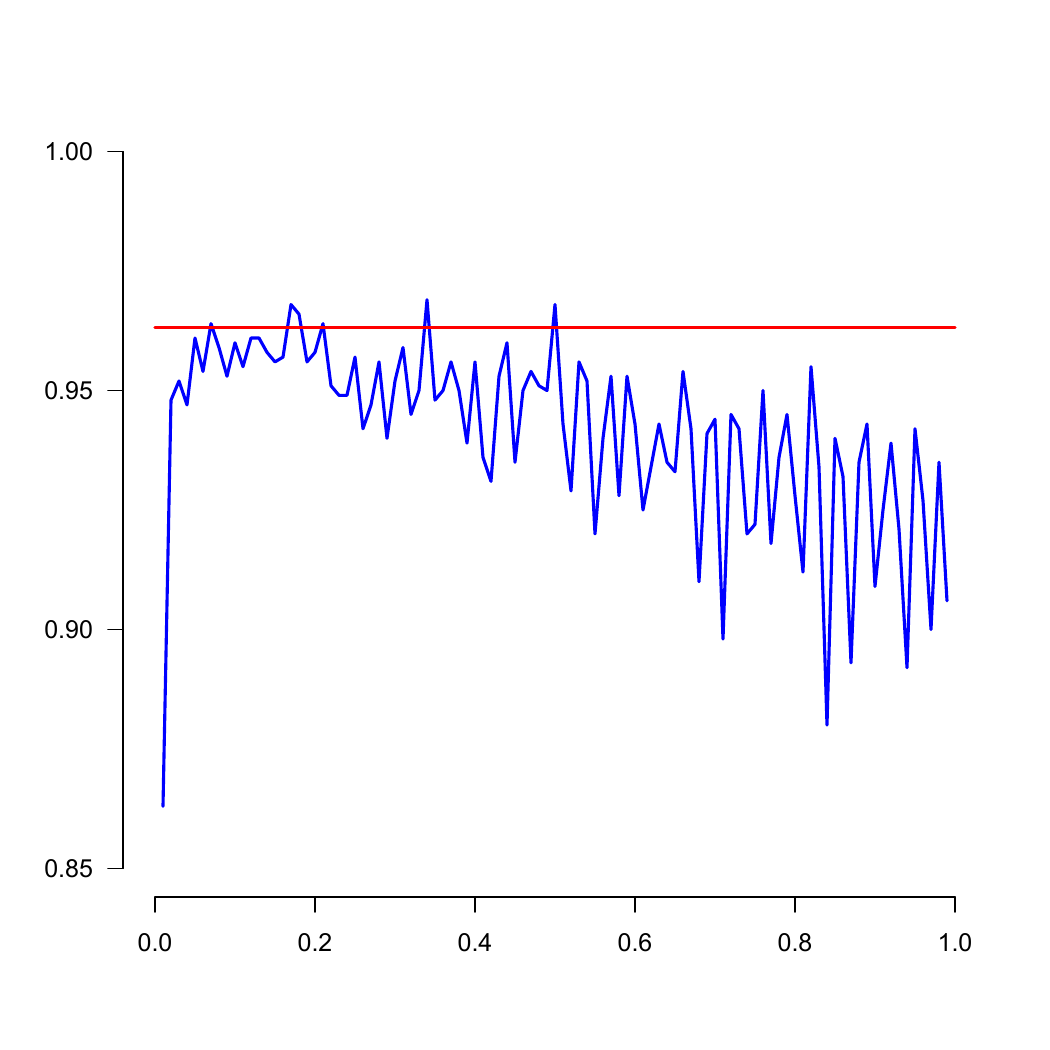}
\caption{}
\label{fig:percentage__percentile1000}
\end{subfigure}
\caption{Coverage percentages for credible intervals and percentile confidence intervals, for $n=1000$.  (a) credible intervals, (b) percentile confidence intervals of section \ref{subsec:percentile_regression}, (c) percentile confidence intervals of section \ref{subsec:classic_bootstrap}. The red line is at level 0.96324, but the intervals are based on the $0.025$ and $0.975$ quantiles of the credible, respectively percentile bootstrap simulations. The level 0.96324 was determined from the values of the function $A$ in \cite{moumita:21}.}
\label{figure:CI_percentages2}
\end{figure}

\section{Cube root $n$ consistent smoothed bootstrap confidence intervals}
\label{sec:CI_cuberoot_n}
In \cite{SenXu2015} it was shown for interval censoring models that cube root $n$ convergent bootstrap confidence intervals can be computed for the distribution function at a fixed point with the right asymptotic coverage. Key in this, is the convergence of the nonparametric maximum likelihood estimator to Chernoff's distribution. We show that a similar approach is possible in the present context.

In the regression context this means that we use, as in \cite{geurt_piet:23}, the smoothed least squares estimator (the SLSE) $\tilde f_{nh}$, for a bandwidth $h=cn^{-1/5}$. 
To define $\tilde f_{nh}$, let $K$ be  a symmetric twice continuously differentiable nonnegative kernel with support $[-1,1]$ such that $\int K(u)\,du=1$. Let $h>0$ be a bandwidth and define the scaled kernel $K_h$  by
\begin{align}
	\label{def_K_h}
	K_h(u)=\frac1h K\left(\frac{u}{h}\right), \,\,\,u\in\R.
\end{align}
The SLSE $\tilde f_{nh}$ is then for $t\in[h,1-h]$ defined by
\begin{align}
\label{def_SLSE}
\tilde f_{nh}(t)=\int K_h(t-x)\,\hat f_n(x)\,dx.
\end{align}
For $t\notin[h,1-h]$ we use the boundary correction, defined in \cite{geurt_piet:23} (see (2.6) and (2.7) in \cite{geurt_piet:23}).

We now generate residuals $E_i$ with respect to (\ref{def_SLSE}), defined by
\begin{align*}
E_i=Y_i-\tilde f_{nh}(X_i),\qquad i=1,\dots,n,
\end{align*}
and compute the centered residuals $\tilde E_i$, 
\begin{align}
\label{residuals}
\tilde E_i=E_i-n^{-1}\sum_{j=1}^n E_j,\qquad i=1,\dots,n.
\end{align}

From these residuals, we generate bootstrap samples
\begin{align}
\label{bootstrap_sample}
(X_i,Y_i^*),\qquad Y_i^*=\tilde f_{nh}(X_i)+\tilde E_i^*,\qquad i=1,\dots,n,
\end{align}
where the $\tilde E_i^*$ are drawn uniformly with replacement from the residuals $\tilde E_i$ defined by (\ref{residuals}). For the bootstrap samples (\ref{bootstrap_sample}), we compute the monotone (non-smoothed) LSE $\hat f_n^*$  and consider the differences
\begin{align}
\label{SLSE_intervals0}
\hat f_n^*(t)-\tilde f_{nh}(t),
\end{align}
and the $95\%$ bootstrap confidence intervals, given by
\begin{align}
\label{first_conf_intervals}
	\left(\hat f_n(t)-Q_{0.975}^*,\hat f_n(t)-Q_{0.025}^*\right),
\end{align}
where $Q_{0.025}^*$ and $Q_{0.975}^*$ are the $2.5$th and $97.5$th percentiles of $1000$ bootstrap samples of (\ref{SLSE_intervals0}) and $\hat f_n$ is the LSE in the original sample. Note that this is the more conventional bootstrap approach, rather than the percentile method.

\begin{figure}[!ht]
\begin{subfigure}[b]{0.3\textwidth}
\includegraphics[width=\textwidth]{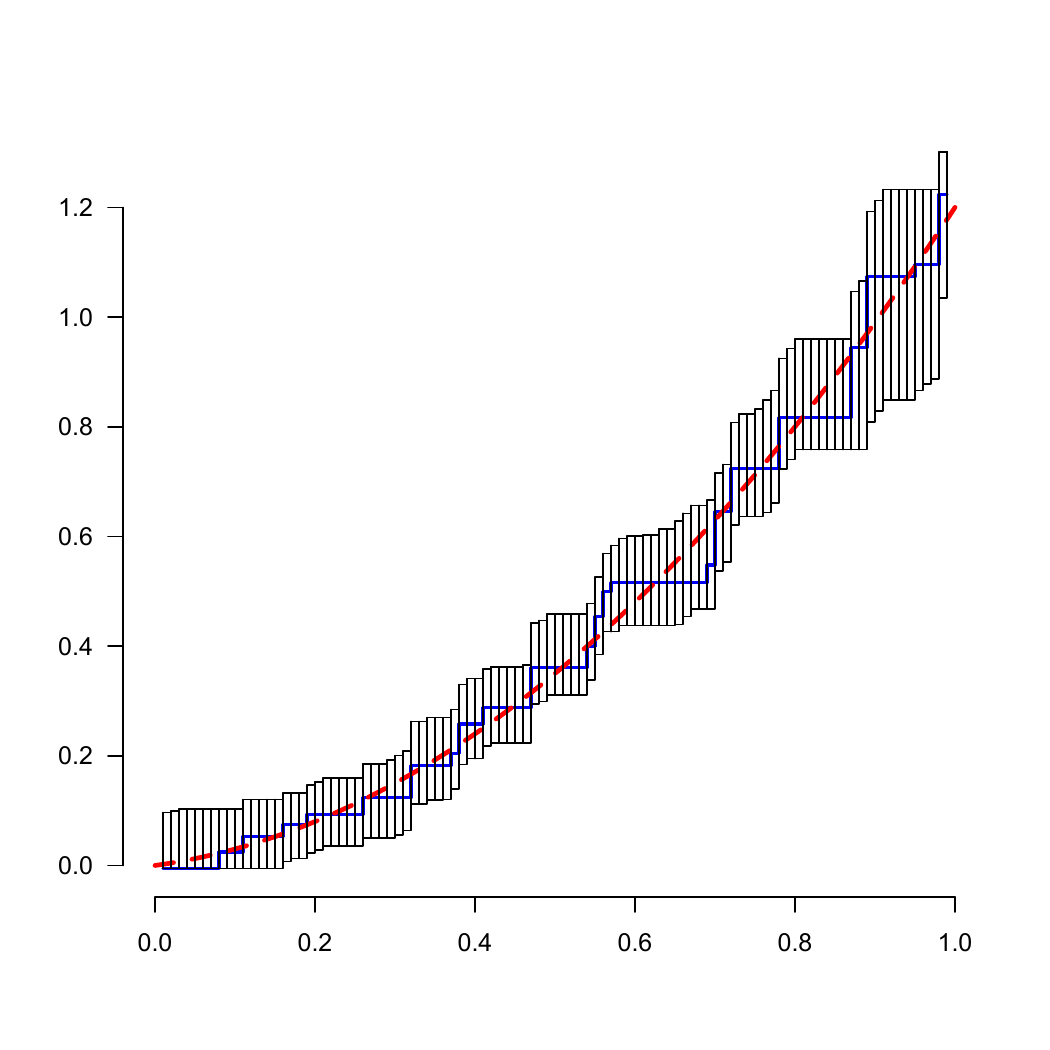}
\caption{}
\label{fig:p1}
\end{subfigure}
\begin{subfigure}[b]{0.3\textwidth}
\includegraphics[width=\textwidth]{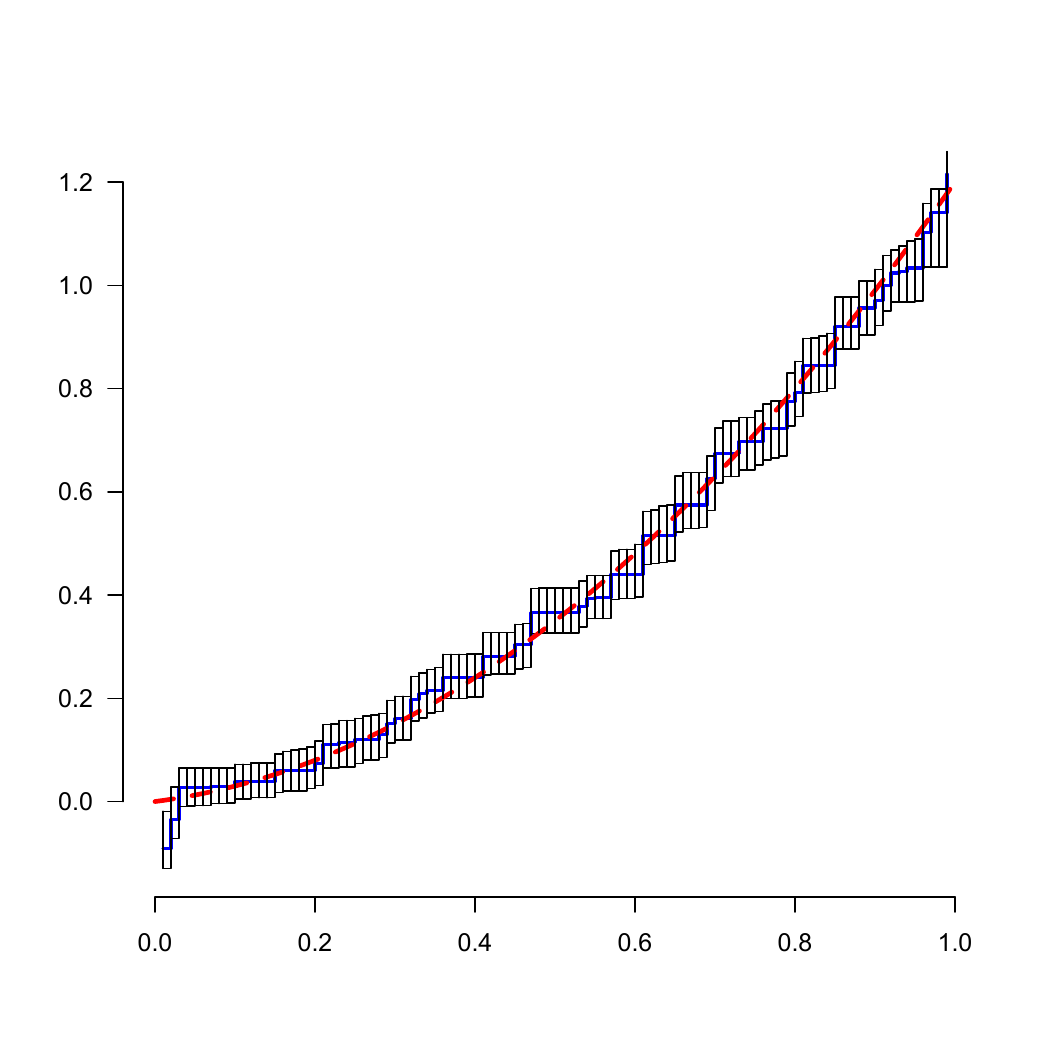}
\caption{}
\label{fig:q1}
\end{subfigure}
\begin{subfigure}[b]{0.3\textwidth}
\includegraphics[width=\textwidth]{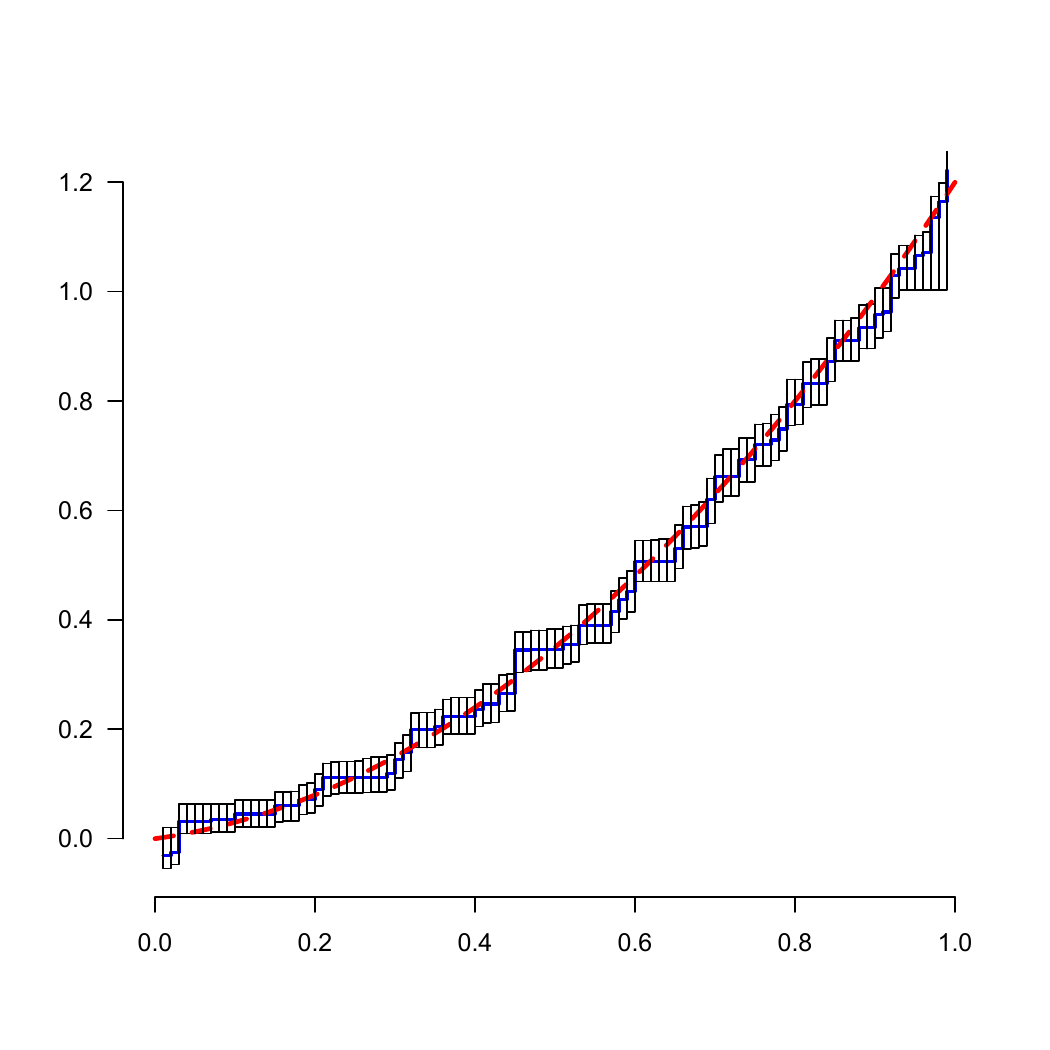}
\caption{}
\label{fig:r1}
\end{subfigure}
\caption{Confidence intervals, based on (\ref{first_conf_intervals}), for $n=100, 500$ and $1000$. The blue curve is the nonparametric monotone LSE, and the red dashed curve the real regression function $f_0(x)=x^2+x/5$. (a) $n=100$, (b) $n=500$, (c) $n=1000$.}
\label{figure:CI_Chernoff}
\end{figure}

The SLSE $\tilde f_{nh}$. which plays a central role in the construction of these confidence intervals, has the limit behavior specified in the following theorem, which is Theorem 1 in \cite{geurt_piet:23}. 
 
 \begin{theorem}
\label{th:limit_SLSE}
	Let $f_0$ be a nondecreasing continuous function on $[0,1]$. Let $X_1,X_2,\ldots$ be i.i.d random variables with continuous density $g$, staying away from zero on $[0,1]$, where the derivative $g'$ is continuous and bounded on $(0,1)$.
	Furthermore, let $\e_1,\e_2\ldots$ be i.i.d. random variables distributed according to a sub-Gaussian distribution with expectation zero and variance $0<\s_0^2<\infty$, independent of the $X_i$'s.	
	Then consider $Y_i$, defined by
	\begin{align*}
		Y_i=f_0(X_i)+\e_i, \,\,\,\, i=1,2,\ldots
	\end{align*}
	Suppose $t_0\in(0,1)$ such that $f_0$ has a strictly positive derivative and a continuous second derivative $f_0''(t_0)\ne0$ at $t_0$. Then, for the SLSE $\tilde f_{nh}$ defined by (\ref{def_SLSE}) based on the pairs $(X_1,Y_1),\ldots,(X_n,Y_n)$, and $h\sim cn^{-1/5}$ for $c>0$,
	\begin{align*}
		n^{2/5}\left\{\tilde f_{nh}(t_0)-f_0(t_0)\right\}\stackrel{{\cal D}}\longrightarrow N(\b,\s^2).
	\end{align*}
	Here
	\begin{align}
		\label{asymp_bias_var}
		\b=\tfrac12c^2 f_0''(t_0)\int u^2K(u)\,du\,\,\, \mbox{ and }\,\,\,\s^2=\frac{\s^2_{0}}{c g(t)}\int K(u)^2\,du.
	\end{align}
The asymptotically Mean Squared Error optimal constant $c$ is given by:
\begin{align*}
c=\left\{\frac{\s^2_{0}}{g(t_0)}\int K(u)^2\,du\Bigm/\left\{f_0''(t_0)\int u^2K(u)\,du\right\}^2\right\}^{1/5}.
\end{align*}
\end{theorem}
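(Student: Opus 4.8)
The plan is to write the rescaled error as the sum of a deterministic smoothing bias and a stochastic term,
\begin{align*}
n^{2/5}\bigl\{\tilde f_{nh}(t_0)-f_0(t_0)\bigr\}
&=n^{2/5}\int K_h(t_0-x)\bigl\{\hat f_n(x)-f_0(x)\bigr\}\,dx\\
&\quad+n^{2/5}\int K_h(t_0-x)\bigl\{f_0(x)-f_0(t_0)\bigr\}\,dx ,
\end{align*}
and to handle the two pieces separately. The second (deterministic) term I would treat first: substituting $x=t_0+hu$, using $\int K(u)u\,du=0$ so that the first-order contribution drops out, and expanding $f_0$ to second order around $t_0$, it converges to $\tfrac12c^2f_0''(t_0)\int u^2K(u)\,du=\b$. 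This step uses only the assumed continuity of $f_0''$ near $t_0$ and the rate $h\sim cn^{-1/5}$.

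The essential work is the first (stochastic) term. First I would record the local rate supplied by the cube-root-$n$ theory for the monotone LSE: $\hat f_n(x)-f_0(x)=O_p(n^{-1/3})$ on shrinking neighbourhoods of $t_0$, with fluctuations that decorrelate on the $n^{-1/3}$ scale in the design variable. Since the kernel window has width $2h\sim n^{-1/5}$, which is of larger order than $n^{-1/3}$, the integral against $K_h$ averages over roughly $n^{2/15}$ such blocks; this is the heuristic behind both the $n^{-2/5}$ rate and the Gaussian (rather than Chernoff) limit. To convert the heuristic into a proof I would establish the linearization
\begin{align*}
n^{2/5}\int K_h(t_0-x)\bigl\{\hat f_n(x)-f_0(x)\bigr\}\,dx
=n^{2/5}\,\frac1n\sum_{i=1}^n\frac{K_h(t_0-X_i)}{g(t_0)}\,\eps_i+o_p(1).
\end{align*}
The displayed linear term is an average of independent, mean-zero summands; a direct computation gives $\E[K_h(t_0-X_1)^2]\sim g(t_0)\int K(u)^2\,du/h$, so its variance is of order $\s_0^2\int K(u)^2\,du/\{nh\,g(t_0)\}$, which after multiplication by $n^{4/5}$ and insertion of $h\sim cn^{-1/5}$ equals exactly $\s^2=\s_0^2\int K(u)^2\,du/\{c\,g(t_0)\}$. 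Asymptotic normality of this term then follows from the Lindeberg--Feller theorem, whose Lindeberg condition is checked using the sub-Gaussian tails of the $\eps_i$ and the boundedness of $K$.

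The hard part will be justifying the linearization, that is, showing that the difference between the smoothed isotonic stochastic term and the linear kernel-weighted error sum is $o_p(1)$ after rescaling by $n^{2/5}$; equivalently, that the isotonization operation does not disturb the first-order behaviour once the estimator is smoothed. To control this remainder I would use the characterization of $\hat f_n$ as the slope of the greatest convex minorant together with the switch relation employed in the proof of Theorem \ref{theorem:limit_posterior}: the correction made by isotonization is supported on the sparse, data-dependent blocks between successive touch points of the convex minorant, and the smoothed sum of these centred corrections can be bounded by a modulus-of-continuity estimate for the associated martingale process on $n^{-1/3}$-scale increments, of the type behind Lemma 3.5 of \cite{piet_geurt:14}, combined with the tightness of the inverse process $U_n^*$ already used above. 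This step requires the most care, since it must simultaneously show that the mean of the stochastic term contributes nothing to the bias at order $n^{-2/5}$.

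Finally, the boundary case $t_0\notin[h,1-h]$ is handled by repeating the interior argument with the boundary-corrected kernel of \cite{geurt_piet:23}, and the two pieces are recombined by Slutsky's theorem: the smoothing bias converges to the constant $\b$ while the stochastic term converges to $N(0,\s^2)$, giving $n^{2/5}\{\tilde f_{nh}(t_0)-f_0(t_0)\}\stackrel{{\cal D}}{\longrightarrow}N(\b,\s^2)$.
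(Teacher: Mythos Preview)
The paper does not actually prove this theorem: it is quoted verbatim as Theorem~1 of \cite{geurt_piet:23} and used as an input to the rest of Section~\ref{sec:CI_cuberoot_n}. So there is no in-paper proof to compare your attempt against.

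That said, your sketch is the standard route for results of this type and is in the spirit of the arguments in \cite{geurt_piet:23}: split $\tilde f_{nh}(t_0)-f_0(t_0)$ into a deterministic smoothing bias and a stochastic piece, do a second-order Taylor expansion for the former, and linearize the latter into a kernel-weighted i.i.d.\ sum to which Lindeberg--Feller applies. Your variance computation is correct and matches (\ref{asymp_bias_var}). Two small remarks. First, the ``boundary case $t_0\notin[h,1-h]$'' is not needed here: $t_0\in(0,1)$ is fixed and $h\to0$, so $t_0\in[h,1-h]$ for all large $n$; the boundary correction in \cite{geurt_piet:23} is for the behaviour of $\tilde f_{nh}$ near $0$ and $1$, not for a fixed interior $t_0$. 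Second, and more substantively, the linearization step you flag as ``the hard part'' is precisely where the proof lives: one has to show rigorously that the isotonization contributes only $o_p(n^{-2/5})$ after smoothing. Your paragraph names the right ingredients (switch relation, localization of the convex minorant, martingale modulus-of-continuity bounds), but stops short of an argument; in \cite{geurt_piet:23} this is carried out via an $L_2$-type bound on $\hat f_n-f_0$ over the kernel window combined with the local cube-root behaviour, rather than by directly appealing to the tightness of an inverse process. If you want this to stand on its own you will need to make that remainder estimate explicit.
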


We have the following lemma of which the proof is given in the Appendix.

\begin{lemma}
\label{lemma:localBM_convergence}
Let $W$, $t_0$ and $V$ be as defined in Lemma \ref{lemma:localBM_convergence_theta} and $\widetilde W_n^*$ be defined by:
\begin{align*}
\widetilde W_n^*(t)=n^{-1/3}\Biggl\{\sum_{i:X_i\in[0,t_0+n^{-1/3}t]}\tilde E_i^*-\sum_{i:X_i\in[0,t_0]}\tilde E_i^*\Biggr\},\qquad t\in\R,
\end{align*}
where $\tilde E_i^*$ is drawn with replacement from the residuals $\tilde E_i$, defined by (\ref{residuals}).
Then, along almost all sequences $(X_1,Y_1),(X_2,Y_2),\dots$, the process $\widetilde W_n^*$ converges in $D(\R)$ in distribution conditionally to the process $V$
\end{lemma}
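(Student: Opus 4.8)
The plan is to follow the proof of Lemma~\ref{lemma:localBM_convergence_theta} almost verbatim, recognising $\widetilde W_n^*$ as a conditional martingale and invoking Rebolledo's theorem. I would fix a realisation of $(X_1,Y_1),(X_2,Y_2),\dots$ in a set of probability one and work conditionally on $D_n$. Conditionally on $D_n$ the variables $\tilde E_1^*,\dots,\tilde E_n^*$ are i.i.d.\ draws from the empirical distribution of the centered residuals $\tilde E_1,\dots,\tilde E_n$ of (\ref{residuals}); since these are centered, each $\tilde E_i^*$ has conditional mean zero and conditional variance $\kappa_n^2:=n^{-1}\sum_{j=1}^n\tilde E_j^2$. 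Treating $t\ge0$ (the case $t<0$ is symmetric), I would reveal the increments $n^{-1/3}\tilde E_i^*$ as the right endpoint $t_0+n^{-1/3}t$ sweeps to the right, so that $t\mapsto\widetilde W_n^*(t)$ is a martingale for the resulting filtration.

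Next I would compute the predictable quadratic variation. Each increment has conditional variance $n^{-2/3}\kappa_n^2$, whence
$$
\bigl\langle\widetilde W_n^*\bigr\rangle(t)=n^{-2/3}\,\kappa_n^2\,\#\bigl\{i:X_i\in(t_0,t_0+n^{-1/3}t]\bigr\}.
$$
Because $g$ is continuous and bounded away from zero at $t_0$, the local empirical count satisfies $\#\{i:X_i\in(t_0,t_0+n^{-1/3}t]\}\sim n^{2/3}g(t_0)\,t$ along almost all sequences, so that $\langle\widetilde W_n^*\rangle(t)\sim\kappa_n^2\,g(t_0)\,t$. This is precisely the quadratic variation $\s_0^2 g(t_0)t$ of the limit $V$, provided $\kappa_n^2\to\s_0^2$.

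Establishing $\kappa_n^2\to\s_0^2$ almost surely is the crux, and the one genuinely new ingredient compared with Lemma~\ref{lemma:localBM_convergence_theta}. Writing $E_i=\eps_i+\{f_0(X_i)-\tilde f_{nh}(X_i)\}$, I would use that $\tilde f_{nh}-f_0$ is of order $n^{-2/5}$ (Theorem~\ref{th:limit_SLSE} and \cite{geurt_piet:23}) to show that $n^{-1}\sum_i E_i^2\to\s_0^2$ a.s.\ — the squared-deviation term is $O(n^{-4/5})$ and the cross term is $O(n^{-2/5})$ by Cauchy--Schwarz — while the centering satisfies $\bar E:=n^{-1}\sum_i E_i\to0$ a.s. Since $\kappa_n^2=n^{-1}\sum_i E_i^2-\bar E^2$, this gives $\kappa_n^2\to\s_0^2$. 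This is the only point at which the specific resampling scheme (drawing from SLSE-residuals rather than adding fresh normal noise) plays a role.

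It remains to verify the conditional negligible-jumps (Lindeberg) hypothesis of Rebolledo's theorem (Theorem~3.6, p.~68 of \cite{piet_geurt:14}, and \cite{rebolledo:80}). Each jump has modulus $n^{-1/3}|\tilde E_i^*|$, and since the residuals inherit sub-Gaussian tails from the $\eps_i$ one has $\max_{i\le n}|\tilde E_i|=o(n^{1/3})$ a.s., so that $n^{-2/3}\sum_i(\tilde E_i^*)^2\,1_{\{n^{-1/3}|\tilde E_i^*|>\delta\}}\to0$ in conditional probability for each $\delta>0$ (indeed the summand is eventually identically zero). Convergence of the predictable quadratic variation to $\s_0^2 g(t_0)t$ together with this jump condition yields, via Rebolledo's theorem, conditional convergence of $\widetilde W_n^*$ in $D(\R)$ to $V$; the two-sided statement follows by treating $t<0$ identically.
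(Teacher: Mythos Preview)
Your proposal is correct and follows essentially the same route as the paper: both arguments recognise $\widetilde W_n^*$ as a conditional martingale, reduce the convergence of the quadratic variation to showing that $n^{-1}\sum_j\tilde E_j^2\to\s_0^2$ a.s.\ via the decomposition $E_i=\eps_i+\{f_0(X_i)-\tilde f_{nh}(X_i)\}$, and conclude by Rebolledo's theorem. The only cosmetic differences are that you work with the predictable quadratic variation $\langle\widetilde W_n^*\rangle$ directly whereas the paper computes the optional quadratic variation $[\widetilde W_n^*]$ and then takes its conditional expectation, and that you spell out the Lindeberg condition and the centering $\kappa_n^2=n^{-1}\sum_iE_i^2-\bar E^2$ explicitly, which the paper leaves implicit.
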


\begin{figure}[!ht]
\begin{subfigure}[b]{0.3\textwidth}
\includegraphics[width=\textwidth]{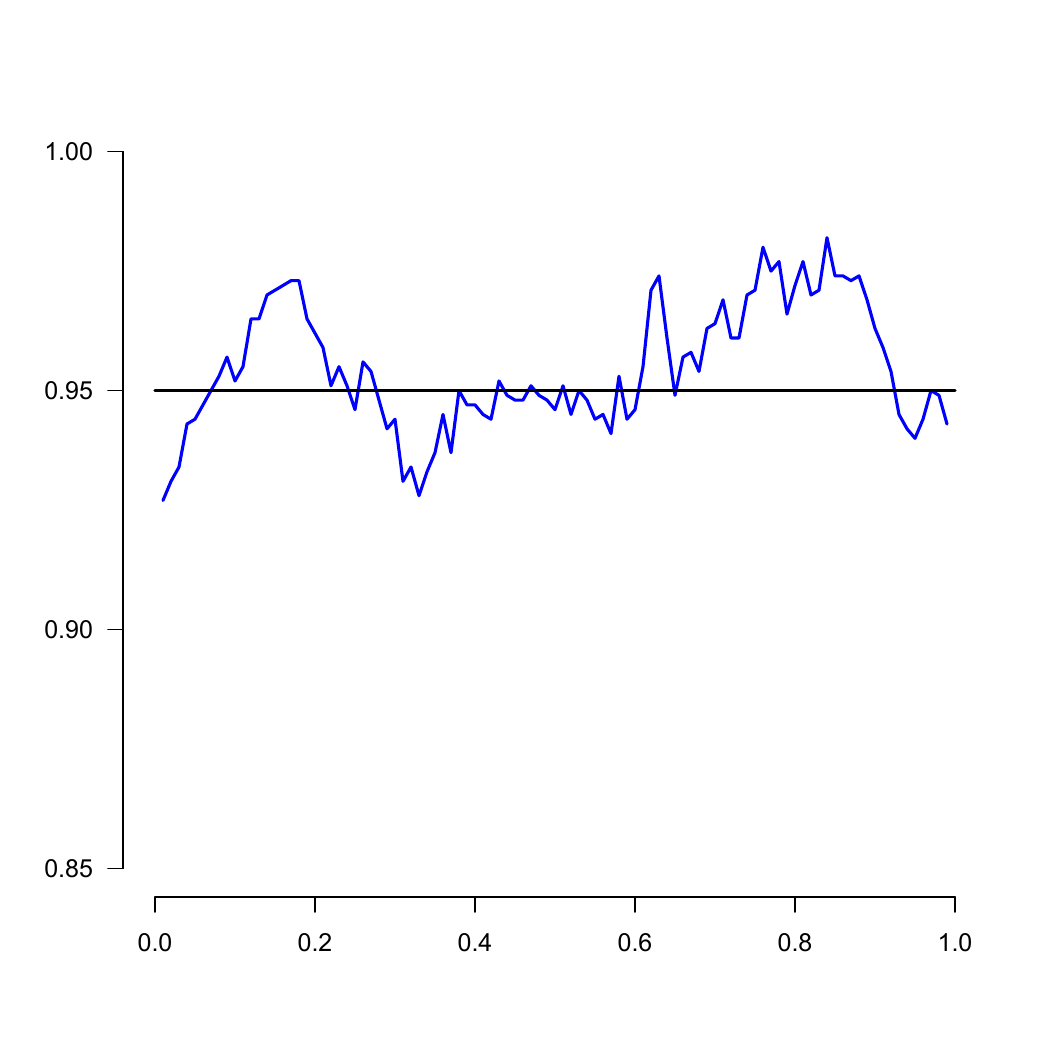}
\caption{}
\label{fig:p2}
\end{subfigure}
\begin{subfigure}[b]{0.3\textwidth}
\includegraphics[width=\textwidth]{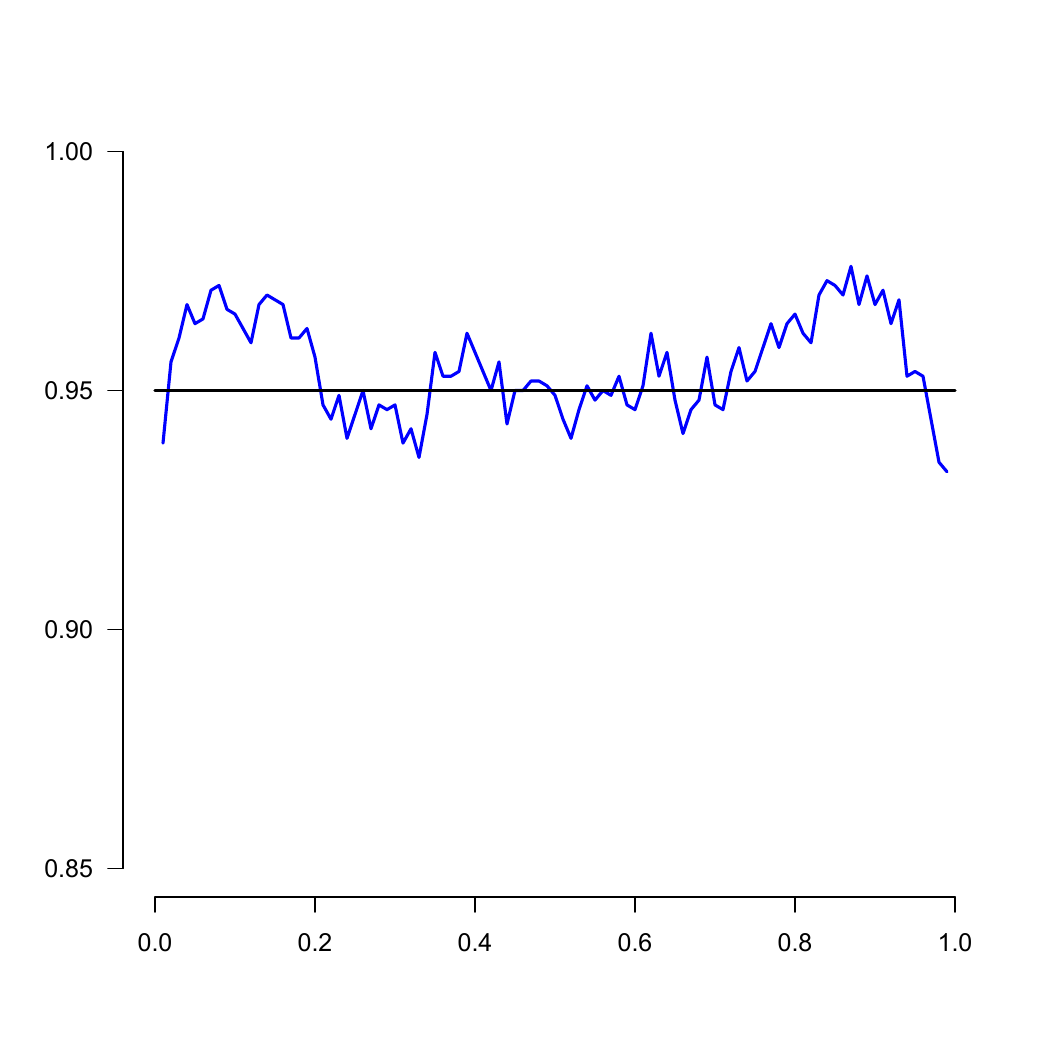}
\caption{}
\label{fig:q2}
\end{subfigure}
\begin{subfigure}[b]{0.3\textwidth}
\includegraphics[width=\textwidth]{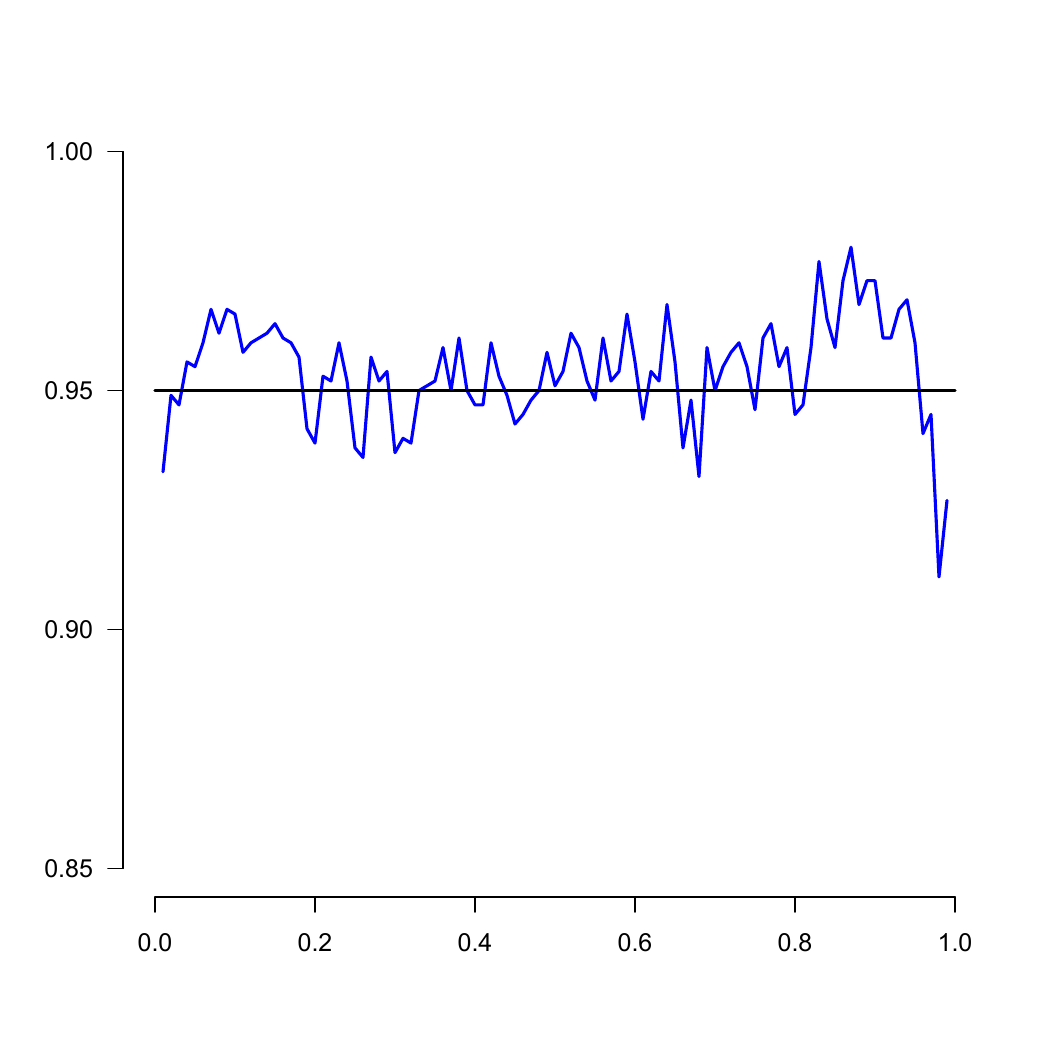}
\caption{}
\label{fig:r2}
\end{subfigure}
\caption{Coverage percentages for confidence intervals, based on (\ref{first_conf_intervals}), for $n=100, 500$ and $1000$.  (a) $n=100$, (b) $n=500$, (c) $n=1000$.}
\label{figure:CI_percentages_Chernoff}
\end{figure}

This leads to the following corollary.

\begin{corollary}
\label{cor:smoothed_bootstrap}
Let the bootstrap LSE $f_n^*$ be constructed as in (\ref{SLSE_intervals0}) and let $t_0$ and the SLSE $\tilde f_{nh}$ be defined as in Theorem \ref{th:limit_SLSE}. Then, along almost all sequences $(X_1,Y_1),\dots$, we have, under the condtions of Theorem \ref{th:limit_SLSE},
\begin{align*}
n^{1/3}\bigl\{\hat f_n^*(t_0)-\tilde f_{nh}(t_0)\bigr\}\stackrel{{\mathcal D}}\longrightarrow Z,
\end{align*}
where
\begin{align*}
Z=\left(\frac{4\s_0^2f_0'(t_0)}{g(t_0)}\right)^{1/3}\text{\rm argmin}_{t\in\R}\left[W(t)+t^2\right],
\end{align*}
and $W$ is standard two-sided Brownian motion, originating from zero.
\end{corollary}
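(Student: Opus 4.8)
The plan is to run the same switch-relation/localization scheme as in the proof of Theorem \ref{theorem:limit_posterior}, but with one decisive simplification. Since we now center the bootstrap estimate at $\tilde f_{nh}(t_0)$, which is a fixed (deterministic) functional of the data $D_n$, the bootstrap pairs $(X_i,Y_i^*)$ with $Y_i^*=\tilde f_{nh}(X_i)+\tilde E_i^*$ form a genuine regression model whose ``true'' regression function is $\tilde f_{nh}$ and whose errors are the i.i.d.\ resampled $\tilde E_i^*$. Consequently, conditionally on $D_n$, the only stochastic fluctuation entering the local picture is the bootstrap noise process $\widetilde W_n^*$ of Lemma \ref{lemma:localBM_convergence}; there is no second, independent ``sample'' process of the type $\widetilde W_n$ that produced the extra Brownian motion $W_2$ in Theorem \ref{theorem:limit_posterior}. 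This is exactly why the limit here is governed by a single two-sided Brownian motion $W$, i.e.\ by Chernoff's distribution, rather than by $W_1+W_2$.

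Concretely, I would first invoke the switch relation as in the proof of Theorem \ref{theorem:limit_posterior}. Writing $a=\tilde f_{nh}(t_0)$, $G_n(t)=n^{-1}\#\{i:X_i\le t\}$ and $V_n^*(t)=n^{-1}\sum_{X_i\le t}Y_i^*$, the switch relation gives the exact identity
\begin{align*}
\P\bigl(n^{1/3}\{\hat f_n^*(t_0)-\tilde f_{nh}(t_0)\}\ge x\bigm|D_n\bigr)
=\P\bigl(\text{argmin}_{t}\bigl[V_n^*(t)-(a+n^{-1/3}x)G_n(t)\bigr]\le t_0\bigm|D_n\bigr).
\end{align*}
Subtracting the (argmin-irrelevant) values at $t_0$, substituting $t=t_0+n^{-1/3}s$ and scaling by $n^{2/3}$, the objective becomes, using $Y_i^*-a=\tilde f_{nh}(X_i)-\tilde f_{nh}(t_0)+\tilde E_i^*$,
\begin{align*}
n^{-1/3}\!\!\sum_{t_0<X_i\le t_0+n^{-1/3}s}\!\!\bigl[\tilde f_{nh}(X_i)-\tilde f_{nh}(t_0)\bigr]
+\widetilde W_n^*(s)-n^{1/3}x\bigl\{G_n(t_0+n^{-1/3}s)-G_n(t_0)\bigr\},
\end{align*}
where $\widetilde W_n^*$ is precisely the process of Lemma \ref{lemma:localBM_convergence}.

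The main technical step is the deterministic drift term. Using that $\tilde f_{nh}$ is smooth with $\tilde f_{nh}'(t_0)\to f_0'(t_0)$ and $\tilde f_{nh}''$ bounded near $t_0$ (which holds under the conditions of Theorem \ref{th:limit_SLSE}; see \cite{geurt_piet:23}), a first-order Taylor expansion gives $\tilde f_{nh}(X_i)-\tilde f_{nh}(t_0)=f_0'(t_0)(X_i-t_0)$ plus a remainder that is negligible after summation over the shrinking window (note $n^{-1/3}\ll h\sim n^{-1/5}$, so the quadratic remainder contributes $o(1)$). Combining this with a Glivenko--Cantelli/law-of-large-numbers argument for the empirical measure of the $X_i$, one obtains, along almost all sequences,
\begin{align*}
n^{-1/3}\!\!\sum_{t_0<X_i\le t_0+n^{-1/3}s}\!\!\bigl[\tilde f_{nh}(X_i)-\tilde f_{nh}(t_0)\bigr]\longrightarrow\tfrac12 f_0'(t_0)g(t_0)s^2,
\end{align*}
while the last term converges to $-x\,g(t_0)\,s$. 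Since Lemma \ref{lemma:localBM_convergence} gives that $\widetilde W_n^*$ converges conditionally to $V=\s_0\sqrt{g(t_0)}\,W$, the localized objective converges to $\s_0\sqrt{g(t_0)}\,W(s)+\tfrac12 f_0'(t_0)g(t_0)s^2-x\,g(t_0)\,s$. As in the remark following Theorem \ref{theorem:limit_posterior}, tightness of $n^{1/3}\{U_n^*(a+n^{-1/3}x)-t_0\}$ (proved along the lines of Lemma 3.5 in \cite{piet_geurt:14}), together with the a.s.\ uniqueness of the argmin of this limit process, lets me apply the argmin continuous-mapping theorem.

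Finally I would identify the limit by Brownian scaling. Completing the square absorbs the linear term into a deterministic shift of the argmin, after which the scaling identity $\text{argmin}_s[\alpha W(s)+\beta s^2]\stackrel{{\mathcal D}}{=}(\alpha/\beta)^{2/3}\text{argmin}_u[W(u)+u^2]$ (for $\alpha,\beta>0$) together with the symmetry of $\text{argmin}_u[W(u)+u^2]$ (the symmetry used in the proof of Theorem 5.2 of \cite{GrWe:92}) shows that $\P(n^{1/3}\{\hat f_n^*(t_0)-\tilde f_{nh}(t_0)\}\ge x\mid D_n)\to\P(Z\ge x)$ with $Z=(4\s_0^2 f_0'(t_0)/g(t_0))^{1/3}\text{argmin}_u[W(u)+u^2]$; the bookkeeping of the constants reduces, as in Theorem \ref{theorem:limit_posterior}, to the identity $4^{1/3}=2^{2/3}$. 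I expect the deterministic drift step --- verifying the $f_0'(t_0)$-coefficient through the derivative consistency of the SLSE and its uniform smoothness on the $n^{-1/3}$-window --- together with the tightness needed for the argmin continuous-mapping theorem, to be the only genuinely non-routine parts; the switch-relation machinery and the scaling otherwise mirror the proof of Theorem \ref{theorem:limit_posterior}.
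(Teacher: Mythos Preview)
Your proposal is correct and follows essentially the same route as the paper: the switch relation, localization to a $n^{-1/3}$-window, Lemma \ref{lemma:localBM_convergence} for the bootstrap noise, the deterministic drift coming from $\tilde f_{nh}(X_i)-\tilde f_{nh}(t_0)$ (together with $\tilde f_{nh}'(t_0)\to f_0'(t_0)$) yielding $\tfrac12 f_0'(t_0)g(t_0)s^2$, and finally Brownian scaling with the symmetry argument from \cite{GrWe:92}. Your explanation of why only one Brownian motion appears here (versus the two in Theorem \ref{theorem:limit_posterior}) is also exactly the point.
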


\begin{proof}
We use the ``switch relation'' again (see the proof of Theorem \ref{theorem:limit_posterior}). Let $G_n$ be the empirical distribution function of the $X_i$ and let $V_n^*$ be defined by
\begin{align*}
V_n^*(t)=\sum_{X_i\le t}Y_i^*,
\end{align*}
where $Y_i^*$ is defined by (\ref{bootstrap_sample}). Let $U_n^*$ be defined by
\begin{align*}
U_n^*(a)=\text{argmin}\{t\in[0,1]:V_n^*(t)-a G_n(t)\},
\end{align*}
for $a$ in the range of $f_0$.
Then we have the ``switch relation'':
\begin{align*}
\hat f_n^*(t)\ge a \iff G_n(t)\ge G_n(U_n^*(a)) \iff t\ge U_n^*(a).
\end{align*}
Hence we get if $a_n=\tilde f_{nh}(t_0)$ and $D_n=\{(X_1,Y_1),\dots,(X_n,Y_n)\}$,
\begin{align*}
&\P\left\{n^{1/3}\{\hat f_n^*(t_0)-\tilde f_{nh}(t_0)\}\ge x|D_n\right\}=
\P\left\{\hat f_n^*(t_0)\ge a_n+n^{-1/3}x|D_n\right\}\\
&=\P\left\{U_n^*(a_n+n^{-1/3}x)\le t_0|D_n\right\}=\P\left\{n^{1/3}\bigl\{U_n^*(a_n+n^{-1/3}x)-t_0\bigr\}\le0|D_n\right\}\nonumber\\
&=\P\left\{\text{argmin}\left[t\in[0,1]:V_n^*(t)-(a_n+n^{-1/3}x)\,G_n(t)\right]\le 0|D_n\right\}\\
&=\P\left\{\text{argmin}\left[t\in[0,1]:V_n^*(t)-V_n^*(t_0)-(a_n+n^{-1/3}x)\{G_n(t)-G_n(t_0)\}\right]\le 0|D_n\right\},
\end{align*}
where the last equality holds since the values of the argmin function do not change if we add constants to the function for which we determine the argmin.

We have:
\begin{align*}
&\P\left\{\text{argmin}\left[t\in[0,1]:V_n^*(t)-V_n^*(t_0)-(a_n+n^{-1/3}x)\{G_n(t)-G_n(t_0)\}\right]\le 0|D_n\right\}\\
&=\P\left\{\text{argmin}_t\left[W_n^*(t)+n^{1/3}\int_{(t_0,t_0+n^{-1/3}t]}\tilde f_{nh}(u)\,dG_n(u)\right.\right.\nonumber\\
&\qquad\qquad\qquad\qquad\qquad\left.\left.-n^{1/3}(\tilde f_{nh}(t_0)+n^{-1/3}x)\{G_n(t_0+n^{-1/3})-G_n(t_0)\}\right]\le 0|D_n\right\}\\
&\sim
\P\left\{\text{argmin}_t\left[W_n^*(t)+\tfrac12f_{nh}'(t_0)g_0(t_0)t^2-xg(t_0)t\right]\le0|D_n\right\}\\
&\sim\P\left\{\text{argmin}_t\left[W_n^*(t)+\tfrac12f_0'(t_0)g_0(t_0)t^2-xg(t_0)t\right]\le0|D_n\right\}\\
&\longrightarrow \P\left\{\text{argmin}_t\left[\s_0\sqrt{g(t_0)}\,W(t)+\tfrac12f_0'(t_0)g(t_0)t^2-xg(t_0)t\right]\le0\right\},
\end{align*}
where we use Lemma \ref{lemma:localBM_convergence} in the last step. By Brownian scaling, we can write
\begin{align*}
&\P\left\{\text{argmin}_t\left[\s_0\sqrt{g(t_0)}\,W(t)+\tfrac12f_0'(t_0)g_0(t_0)t^2-xg(t_0)t\right]\le0\right\}\\
&=\P\left\{\left(\frac{4\s_0^2f_0'(t_0)}{g(t_0)}\right)^{1/3}\text{argmin}_t\left[W(t)+t^2\right]\le -x\right\}\\
&=\P\left\{\left(\frac{4\s_0^2f_0'(t_0)}{g(t_0)}\right)^{1/3}\text{argmin}_t\left[W(t)+t^2\right]\ge x\right\}.
\end{align*}
\end{proof}

\begin{remark}
{\rm
Corollary \ref{cor:smoothed_bootstrap} shows that the limit distribution of $n^{1/3}\bigl\{\hat f_n^*(t_0)-\tilde f_{nh}(t_0)\bigr\}$, along almost all sequences $D_n$, is the same as the limit distributionof $n^{1/3}\bigl\{\hat f_n(t_0)-\tilde f_0(t_0)\bigr\}$. In the latter case the limit distribution was first derived in \cite{brunk:70}.
}
\end{remark}

Using this corollary it is clear that in using this method the (ordinary, not percentile) smoothed bootstrap simulations recreate the actual asymptotic distribution correctly, and that we do not have to use a correction for over- or undercoverage. It is also clear from Figure \ref{figure:CI_percentages_Chernoff} that its behavior is much better than the behavior of the confidence intervals in the preceding section. Even for sample size $n=100$ the confidence intervals are more or less ``on target''. In comparison, the credible intervals are still far off the target for these sample sizes, and the overcoverage has still not set in for these sample sizes, see Figure \ref{figure:CI_percentages_credible}.

\begin{figure}[!ht]
\begin{subfigure}[b]{0.3\textwidth}
\includegraphics[width=\textwidth]{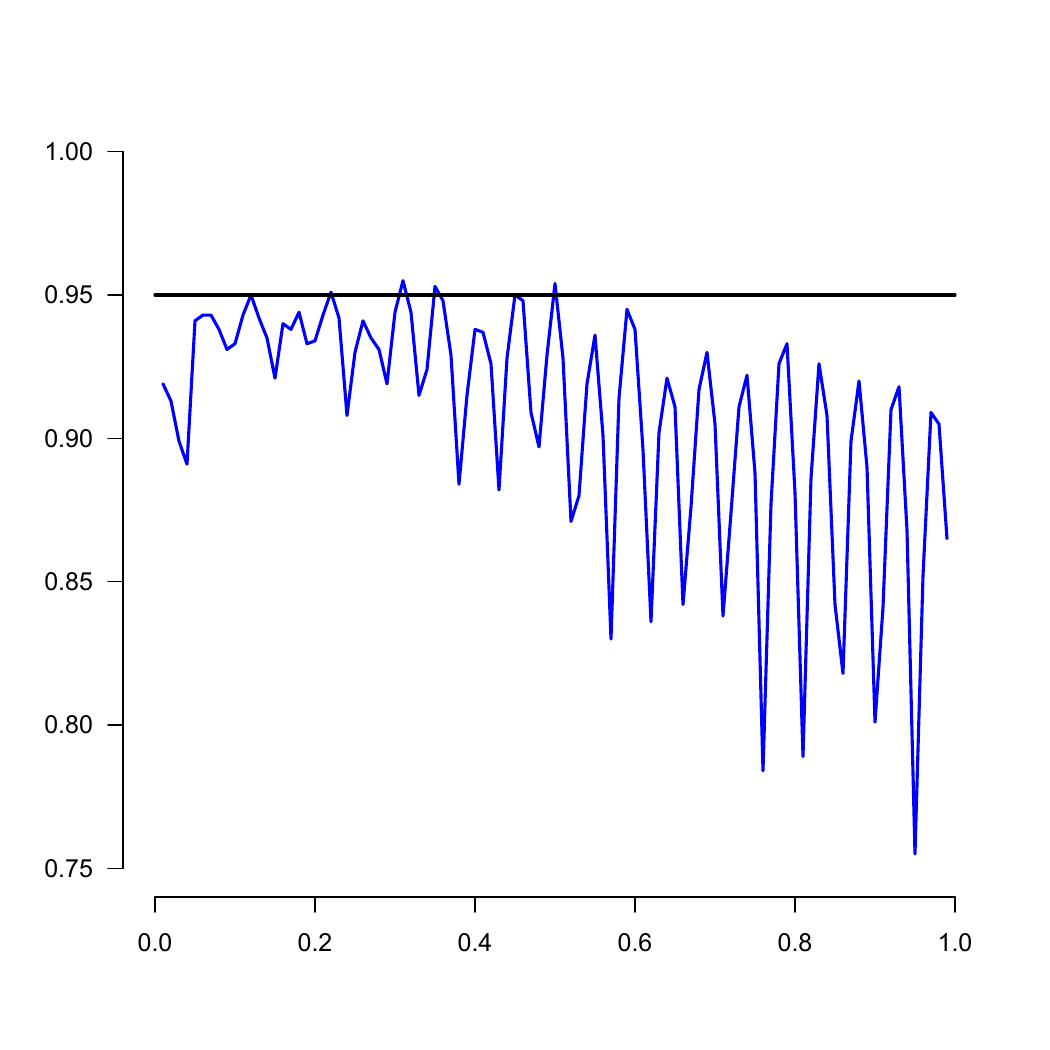}
\caption{}
\label{fig:percentages_credible100}
\end{subfigure}
\begin{subfigure}[b]{0.3\textwidth}
\includegraphics[width=\textwidth]{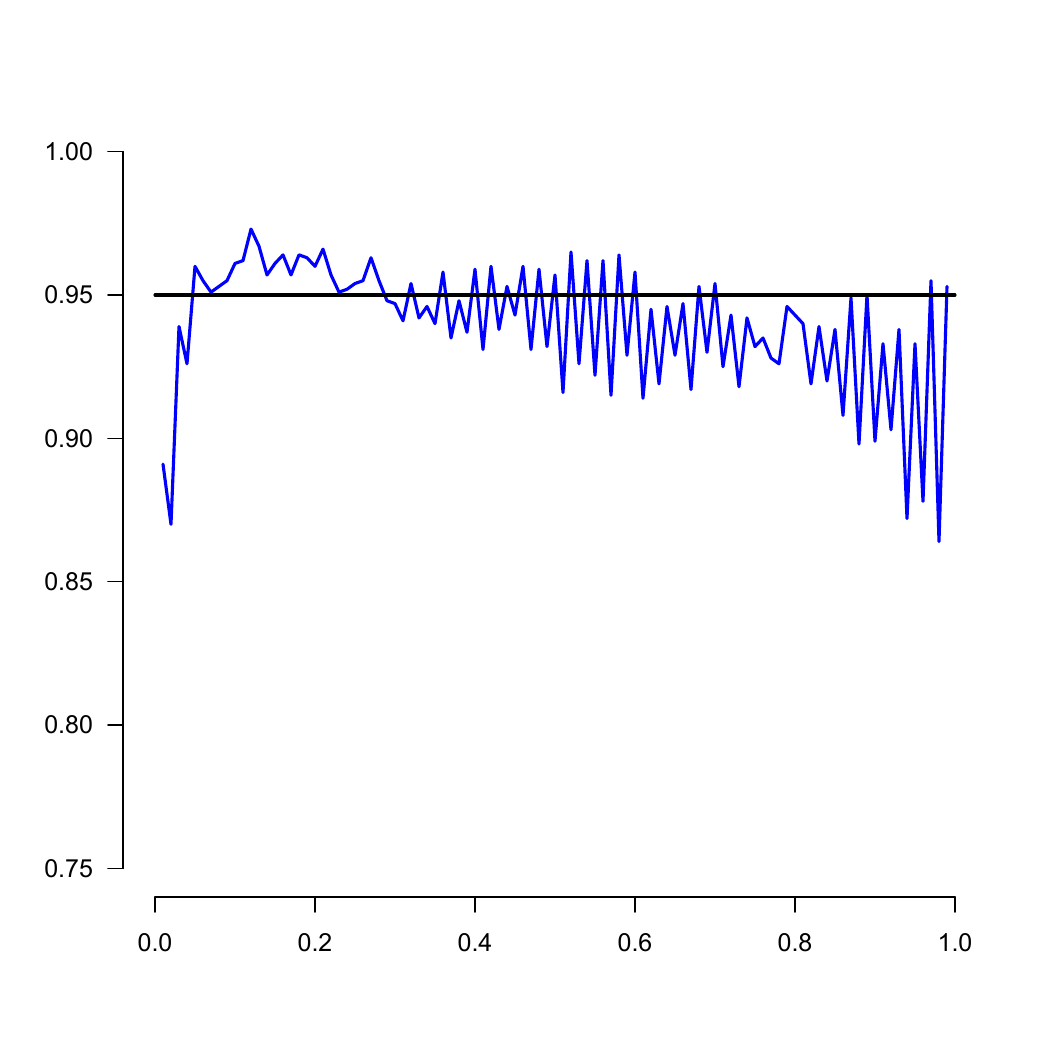}
\caption{}
\label{fig:percentages_credible500}
\end{subfigure}
\begin{subfigure}[b]{0.3\textwidth}
\includegraphics[width=\textwidth]{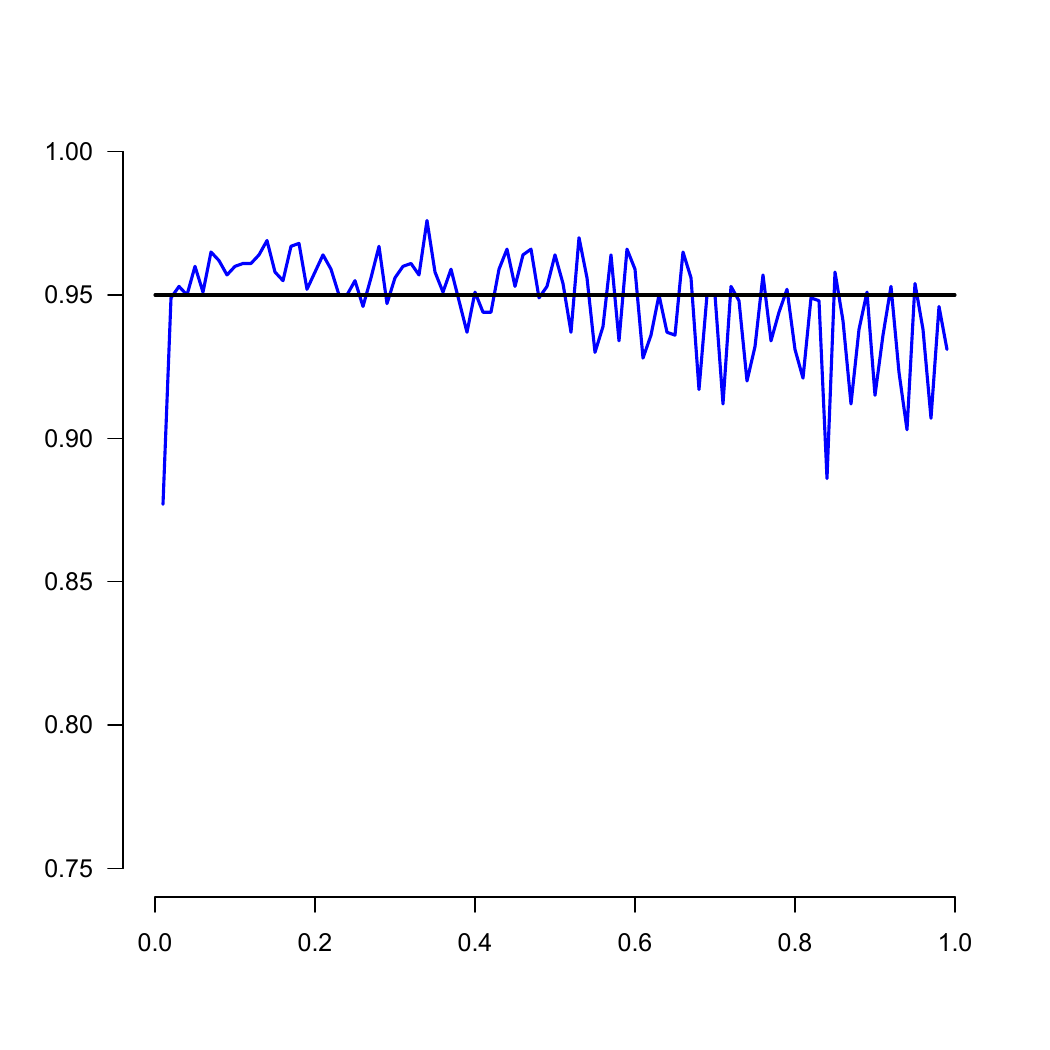}
\caption{}
\label{fig:percentages_credible1000}
\end{subfigure}
\caption{Coverage percentages for the confidence intervals,  for   (a) $n=100$, (b) $n=500$, (c) $n=1000$.}
\label{figure:CI_percentages_credible}
\end{figure}

We can also prove the result below, illustrating the fact that there is no need for correction for over-or under coverage.
\begin{theorem}
\label{theorem:limit_smooth_bootstrap}
Let $D_n=\{(X_1,Y_1),\dots,(X_n,Y_n)\}$ and $h\asymp n^{-1/5}$, and let $t_0$ and $\tilde f_{nh}$ be defined as in Corollary \ref{cor:smoothed_bootstrap}. Let the conditions of Theorem \ref{th:limit_SLSE} be satisfied. Then, for $z\in(0,1)$, as $n\to\infty$,
\begin{align}
\label{smoothed_bootrstrap_values}
&\P\left\{\P\left(n^{1/3}\bigl\{\hat f_n^*(t_0)-\tilde f_{nh}(t_0)+\hat f_n(t_0)-f_0(t_0)\bigr\}\le 0\Bigm|D_n\right)\le z\right\}\longrightarrow z.
\end{align}
\end{theorem}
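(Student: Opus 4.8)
The plan is to read the inner conditional probability in (\ref{smoothed_bootrstrap_values}) as a single random variable depending on $D_n$ alone, and to show that this variable converges in distribution to the uniform law on $[0,1]$; statement (\ref{smoothed_bootrstrap_values}) is then precisely the convergence of its distribution function to the identity at each $z\in(0,1)$. Write $A_n=n^{1/3}\{\hat f_n(t_0)-f_0(t_0)\}$, which is a function of $D_n$ only, and let
$$F_n^*(x)=\P\bigl(n^{1/3}\{\hat f_n^*(t_0)-\tilde f_{nh}(t_0)\}\le x\bigm|D_n\bigr)$$
be the conditional bootstrap distribution function. Given $D_n$, the quantity $A_n$ is a constant, and both $\tilde f_{nh}(t_0)$-centering and the bootstrap randomness are carried by $\hat f_n^*(t_0)-\tilde f_{nh}(t_0)$, so the inner probability equals $U_n:=F_n^*(-A_n)$. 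Let $F_Z$ be the distribution function of the limit $Z$ in Corollary \ref{cor:smoothed_bootstrap}.

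First I would invoke Corollary \ref{cor:smoothed_bootstrap}: along almost all sequences $(X_1,Y_1),(X_2,Y_2),\dots$ the conditional law of $n^{1/3}\{\hat f_n^*(t_0)-\tilde f_{nh}(t_0)\}$ converges weakly to $Z$. Since the Chernoff-type law is absolutely continuous, $F_Z$ is continuous, so P\'olya's theorem upgrades this to uniform convergence, $\sup_x|F_n^*(x)-F_Z(x)|\to0$ almost surely. Evaluating at the data-dependent point $-A_n$ then gives $|U_n-F_Z(-A_n)|\le\sup_x|F_n^*(x)-F_Z(x)|\to0$ almost surely, hence in probability.

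Next I would use the classical cube-root asymptotics for the monotone LSE recalled in the Remark after Corollary \ref{cor:smoothed_bootstrap} (due to \cite{brunk:70}): $A_n\stackrel{\mathcal D}{\longrightarrow}Z'$, where $Z'$ has exactly the same distribution as $Z$, i.e. distribution function $F_Z$. By continuity of $F_Z$ and the continuous mapping theorem, $F_Z(-A_n)\stackrel{\mathcal D}{\longrightarrow}F_Z(-Z')$, and combining with $U_n-F_Z(-A_n)\to0$ in probability, Slutsky's lemma gives $U_n\stackrel{\mathcal D}{\longrightarrow}F_Z(-Z')$. The limit is identified by symmetry: $\text{argmin}_t[W(t)+t^2]$ is symmetric about $0$ (replacing $W(t)$ by $W(-t)$ reflects the argmin), so $Z'$, and therefore $-Z'$, has distribution function $F_Z$; by the probability integral transform $F_Z(-Z')$ is uniform on $[0,1]$. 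Hence $\P(U_n\le z)\to z$ for every $z\in(0,1)$, which is (\ref{smoothed_bootrstrap_values}).

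The main obstacle is the interplay in the middle step between two different modes of convergence attached to the same $D_n$: Corollary \ref{cor:smoothed_bootstrap} controls the bootstrap distribution only conditionally and only along almost all sequences, whereas $A_n$ is genuinely random and converges only in distribution. The clean way around this is exactly the P\'olya upgrade to uniform convergence, which makes the error $U_n-F_Z(-A_n)$ negligible regardless of where the random centering $-A_n$ lands, so that the weak limit of $U_n$ is driven entirely by that of $A_n$. One should also dispatch minor continuity points—that the distinction between $\le$ and $<$ in $F_n^*$ is immaterial because $F_Z$ is continuous, and that the integral-transform step needs only this continuity—but these are routine once the symmetry and continuity of the Chernoff law are in hand.
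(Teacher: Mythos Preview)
Your proof is correct and follows essentially the same route as the paper's: rewrite the inner conditional probability as $F_n^*(-A_n)$, replace $F_n^*$ by the limiting Chernoff distribution function via Corollary~\ref{cor:smoothed_bootstrap}, and then use Brunk's weak limit for $A_n$ together with the symmetry of the Chernoff law and the probability integral transform to obtain uniformity. You are in fact more explicit than the paper in justifying the replacement step (the P\'olya upgrade to uniform convergence, followed by Slutsky), whereas the paper records this step with a bare ``$\sim$''.
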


\begin{proof}
Let
\begin{align*}
U_n=n^{1/3}\bigl\{\hat f_n^*(t_0)-\tilde f_{nh}(t_0)\bigr\},\qquad V_n=n^{1/3}\bigl\{\hat f_n(t_0)-f_0(t_0)\bigr\},
\end{align*}
and let $\Phi$ be the distribution function of $Z$, defined in Corollary \ref{cor:smoothed_bootstrap}.
We have
\begin{align*}
&\P\left\{\P\left(n^{1/3}\bigl\{\hat f_n^*(t_0)-\tilde f_{nh}(t_0)+\hat f_n(t_0)-f_0(t_0)\bigr\}\le 0\Bigm|D_n\right)\le z\right\}\\
&=\P\left\{\P\{V_n\le -U_n|D_n\}\le z\right\}\\
&\sim\P\left\{\Phi(-U_n)\}\le z\right\}=\P\left\{-U_n\le \Phi^{-1}(z)\right\}
\longrightarrow \Phi\circ \Phi^{-1}(z)=z,
\end{align*}
using the symmetry around zero of the distribution of $Z$ (the limit in distribution of $U_n$).
\end{proof}

\begin{figure}[!ht]
\centering
\includegraphics[width=0.45\textwidth]{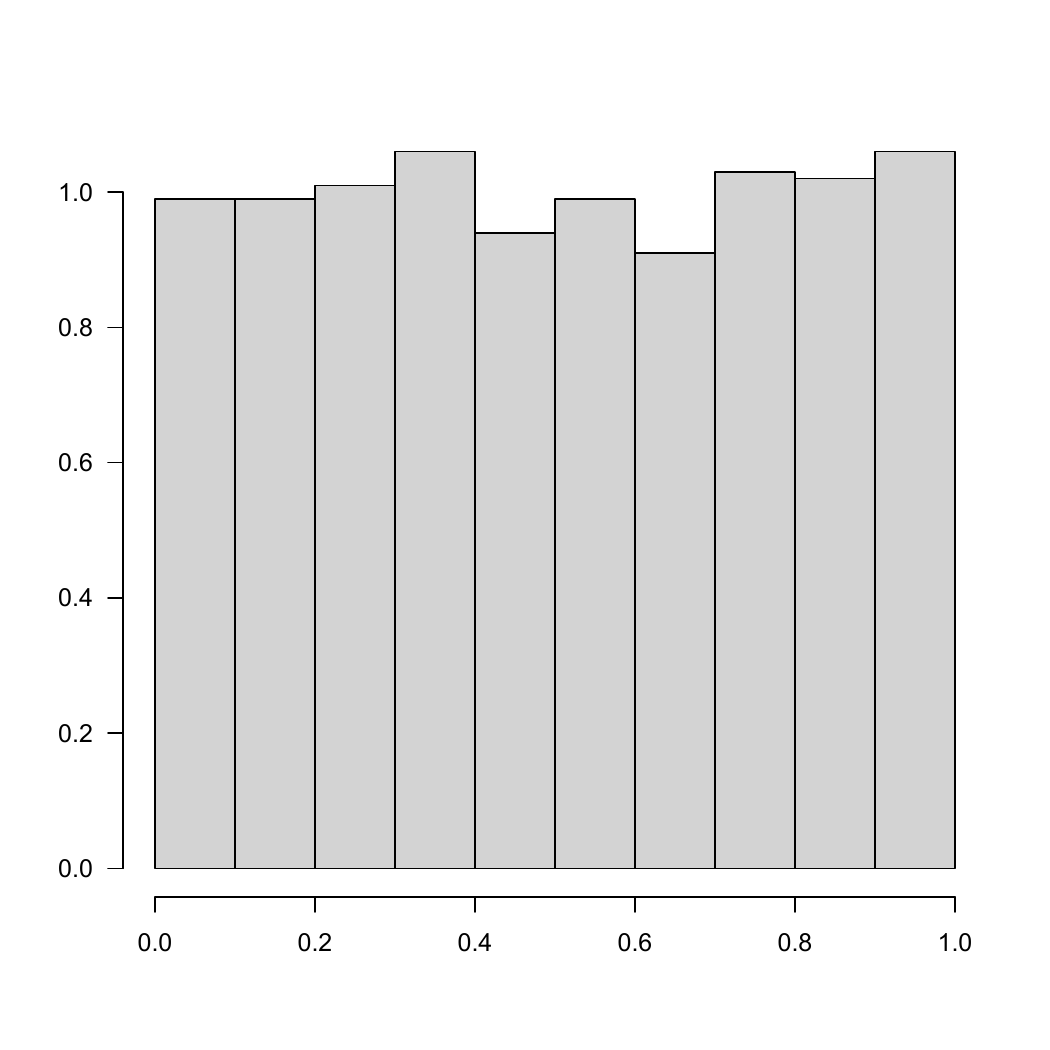}
\caption{Histogram of $1000$ estimates of $\P\left(n^{1/3}\bigl\{\hat f_n^*(t_0)-\tilde f_{nh}(t_0)+\hat f_n(t_0)-f_0(t_0)\bigr\}\le 0\bigm|D_n\right)$ for the smoothed bootstrap, for $1000$ samples $D_n$ of size $n=2000$ and $t_0=0.5$. }
\label{figure:histogram_smoothed_bootstrap2000}
\end{figure}

Note that $\hat f_n^*(t_0)$ is now centered by $\tilde f_{nh}(t_0)$ instead of $\hat f_n$, and that $n^{1/3}\{\hat f_n^*(t_0)-\tilde f_{n,h}(t_0)\}$ tends to the right limit distribution, in contrast with $n^{1/3}\{\hat f_n^*(t_0)-\hat f_n(t_0)\}$. The histogram of estimates of $\P\bigl(n^{1/3}\{\hat f_n^*(t_0)-\tilde f_{nh}(t_0)+\hat f_n(t_0)-f_0(t_0)\}\le 0\bigm|D_n\bigr)$, based on relative frequencies, is shown in Figure \ref{figure:histogram_smoothed_bootstrap2000}.

For the ordinary (non-percentile) bootstrap we get by an entirely similar proof, in which we do not need the symmetry of the limit distribution:

\begin{theorem}
\label{theorem:limit_smooth_bootstrap2}
Let $D_n=\{((X_1,Y_1),\dots,(X_n,Y_n)\}$ and $h\asymp n^{-1/5}$. Let the conditions of Theorem \ref{th:limit_SLSE} be satisfied. Then, for $z\in(0,1)$, as $n\to\infty$,
\begin{align}
\label{smoothed_bootrstrap_values2}
&\P\left\{\P\left(\hat f_n^*(t_0)-\tilde f_{nh}(t_0)\le \hat f_n(t_0)-f_0(t_0)\Bigm|D_n\right)\le z\right\}\longrightarrow z.
\end{align}
\end{theorem}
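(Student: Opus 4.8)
The plan is to mirror the proof of Theorem \ref{theorem:limit_smooth_bootstrap} almost verbatim, the only difference being that the event under consideration is now a one-sided comparison in the ``right'' direction, so that no reflection step (and hence no symmetry of the limit law) is needed. First I would set, exactly as before,
\begin{align*}
U_n=n^{1/3}\bigl\{\hat f_n^*(t_0)-\tilde f_{nh}(t_0)\bigr\},\qquad V_n=n^{1/3}\bigl\{\hat f_n(t_0)-f_0(t_0)\bigr\},
\end{align*}
and let $\Phi$ denote the distribution function of the limit variable $Z$ from Corollary \ref{cor:smoothed_bootstrap}; recall that $Z$ is a scaled Chernoff-type variable, so $\Phi$ is continuous and strictly increasing. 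Multiplying the inequality inside the conditional probability by $n^{1/3}>0$ converts the event $\{\hat f_n^*(t_0)-\tilde f_{nh}(t_0)\le\hat f_n(t_0)-f_0(t_0)\}$ into $\{U_n\le V_n\}$, so the inner probability in (\ref{smoothed_bootrstrap_values2}) equals $\P(U_n\le V_n\mid D_n)$.

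The second step is the replacement $\P(U_n\le V_n\mid D_n)\sim\Phi(V_n)$. Here $V_n$ is $D_n$-measurable, hence constant under the conditional law, while Corollary \ref{cor:smoothed_bootstrap} gives, along almost all sequences $(X_1,Y_1),(X_2,Y_2),\dots$, the conditional convergence $U_n\stackrel{{\mathcal D}}{\longrightarrow}Z$. Since $\Phi$ is continuous, P\'olya's theorem upgrades this to the uniform statement $\sup_{x\in\R}\bigl|\P(U_n\le x\mid D_n)-\Phi(x)\bigr|\to0$ along almost all sequences, which is exactly what licenses evaluating the conditional distribution function at the data-dependent point $V_n$.

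Third, in contrast with Theorem \ref{theorem:limit_smooth_bootstrap}, the comparison is already in the convenient direction: continuity and strict monotonicity of $\Phi$ give
\begin{align*}
\P\bigl\{\Phi(V_n)\le z\bigr\}=\P\bigl\{V_n\le\Phi^{-1}(z)\bigr\}.
\end{align*}
Finally, by the classical result of \cite{brunk:70} recalled in the remark following Corollary \ref{cor:smoothed_bootstrap}, $V_n=n^{1/3}\{\hat f_n(t_0)-f_0(t_0)\}$ converges in distribution to the \emph{same} limit $Z\sim\Phi$, so $\P\{V_n\le\Phi^{-1}(z)\}\to\Phi(\Phi^{-1}(z))=z$ (note $\Phi^{-1}(z)$ is a continuity point of $\Phi$). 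Combining the four steps yields (\ref{smoothed_bootrstrap_values2}). The place where symmetry entered in Theorem \ref{theorem:limit_smooth_bootstrap}, namely the passage from $V_n$ to $-V_n$, is simply absent here, which is precisely why symmetry of the limit law is not required.

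The hard part will be the rigorous justification of the second step: the conditional law of $U_n$ converges only along almost all sequences, whereas $V_n$ fluctuates with $D_n$, so the argument $\Phi(V_n)$ couples a uniform-in-$x$ conditional limit with the weak convergence of $V_n$. The clean way around this is the uniform P\'olya bound above together with a Slutsky-type observation that $\P(U_n\le V_n\mid D_n)-\Phi(V_n)\to0$ in probability; once this is established, the remaining steps are routine.
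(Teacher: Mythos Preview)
Your proposal is correct and is exactly the argument the paper has in mind: the paper does not give a separate proof of Theorem \ref{theorem:limit_smooth_bootstrap2} but simply states that it follows ``by an entirely similar proof, in which we do not need the symmetry of the limit distribution,'' and your four steps reproduce that intended argument faithfully. If anything, your treatment of the second step---invoking P\'olya's theorem to get uniform convergence of the conditional distribution function so that it may be evaluated at the data-dependent point $V_n$---is more explicit than what the paper supplies for the analogous step in Theorem \ref{theorem:limit_smooth_bootstrap}, where this passage is compressed into a single ``$\sim$''.
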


The result gives an interesting consequence of what it means to say that the "bootstrap works''. This phenomenon also occurs in the simple bootstrap setting where one resamples with replacement from samples $U_1,\ldots,U_n$ from a normal $N(\mu,\sigma^2)$ distribution with the aim to construct a confidence set for the mean. Then also, for all $z\in(0,1)$, 
\begin{align}
	\P\left\{\P\left(\bar{U}_n^*-\bar{U}_n\le \bar{U}_n-\mu\Bigm|U_1,\ldots U_n\right)\le z\right\}\longrightarrow z.
\end{align}

\section{Concluding remarks}
We showed that the construction of pointwise credible intervals for the monotone regression function, as proposed in \cite{moumita:21}, has an interpretation as the construction of percentile bootstrap intervals. The overcoverage,
as explained by Theorem \ref{theorem:limit_posterior}, only sets in for very large sample sizes, like $n=20,000$; for smaller sample sizes we have observed undercoverage.

Because the confidence intervals are based on piecewise constant estimates of the regression function, on intervals of equal length, the effect of bias is very pronounced, which does not hold for the confidence intervals, based on the smoothed bootstrap in Section \ref{sec:CI_cuberoot_n}. The latter confidence intervals have the further advantages of being on target also for smaller sample sizes, and not needing correction for overcoverage or undercoverage, since the estimates are consistent.

The consistency is also borne out by Theorem \ref{theorem:limit_smooth_bootstrap2}, showing convergence to the uniform distribution of $\P(\hat f_n^*(t_0)-\tilde f_{nh}(t_0)\le \hat f_n(t_0)-f_0(t_0)|D_n)$, $D_n=\{(X_1,Y_1),\dots,(X_n,Y_n)\}$, for the smoothed bootstrap estimates $\hat f_n^*$, in contrast with the situation for the credible intervals and the percentile bootstrap intervals, where we need a correction for convergence of $\P(\hat f_n^*(t_0)-f_0(t_0)\le 0\bigm|D_n)$ to a distribution different from the uniform distribution.

As shown in \cite{geurt_piet:23}, it is also possible to use the smoothed least squares estimator (SLSE) directly as the basis for confidence intervals. In this case,  resampling is done from residuals w.r.t.\ an oversmoothed estimate of the regression function to treat the bias in the right way. The bias is in this case much more of a problem because the variance and squared bias of the SLSE are of the same order if the bandwidth is of order $n^{-1/5}$. A picture of confidence intervals of this type is given in Figure \ref{figure:CI_first_intervals100}. For more details, see \cite{geurt_piet:23}.

All simulations in our paper can be recreated using the ${\tt R}$ scripts in \cite{piet_github:21}.

\begin{figure}[!ht]
\includegraphics[width=0.5\textwidth]{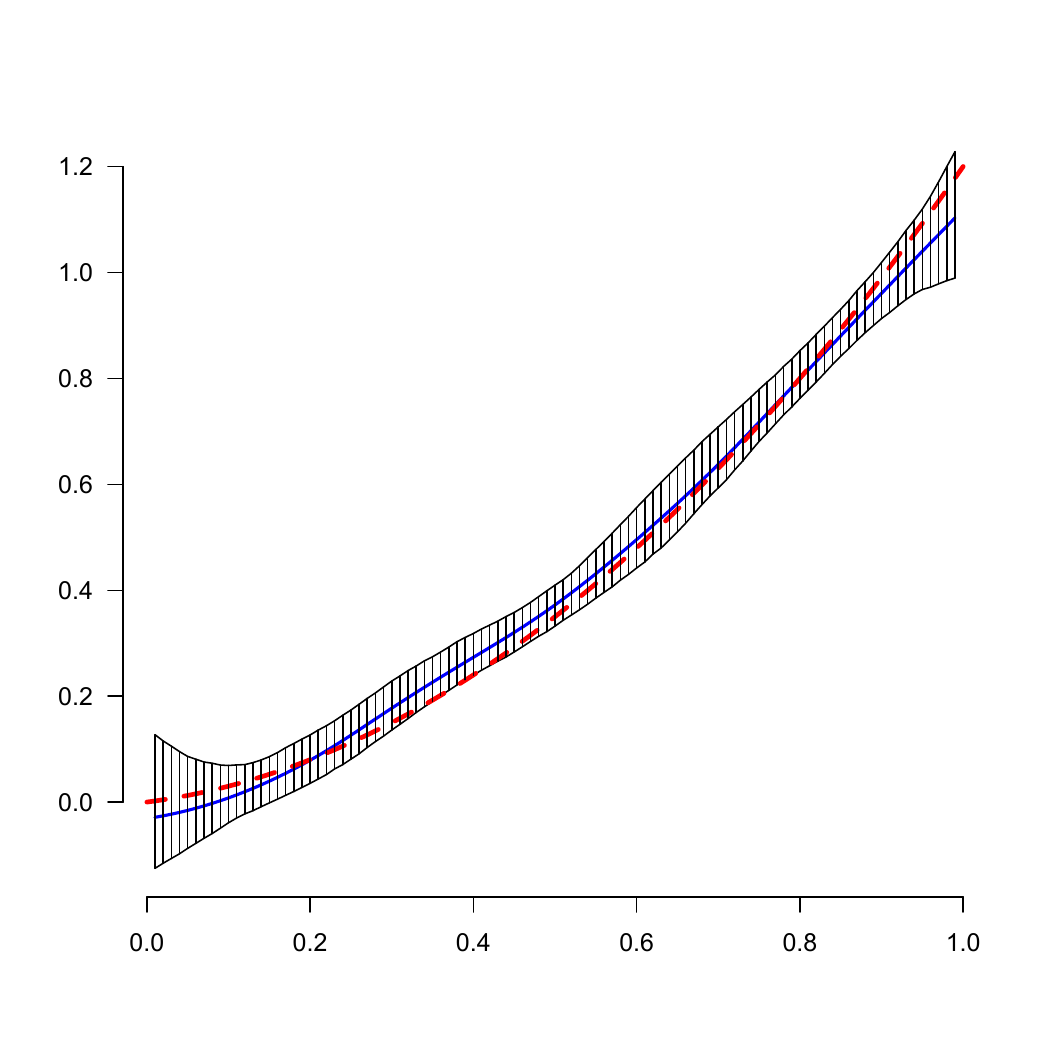}
\caption{The SLSE (blue, solid) and $95\%$ confidence intervals, using the theory in \cite{geurt_piet:23}, for sample size $n=100$ and $f_0(x)=x^2+x/5$ (red dashed curve).}
\label{figure:CI_first_intervals100}
\end{figure}

\section*{Appendix}
{\sc Derivation of raw posterior distribution (\ref{posterior_theta})}\\
Considering $\bm X$ fixed, we have $\bm Y=\bm B\bm\theta+\bm \epsilon$, where $\bm B=(b_{ij})$ with $b_{ij}=1_{I_j}(X_i)$ giving $\bm Y|\bm\theta\sim N_n(\bm B\bm\theta,\sigma_0^2I_n)$. Writing $\bm \Lambda={\rm Diag}(\lambda_j^2)$, this can be combined with the prior $\bm \theta\sim N_J(\bm\zeta,\sigma_0^2\Lambda)$ yielding
\begin{align*}
f(\bm\theta|\bm Y)\propto f(\bm Y|\bm\theta)f(\bm\theta)\propto \exp\left(-\frac1{2\sigma_0^2}\left[\bm\theta^T(\bf\Lambda^{-1}+\bm B^T\bm B)\bm\theta-2\bm\theta^T(\bm B^T\bm Y+\bm \Lambda^{-1}\bm\zeta)\right]\right)
\end{align*}  
By `completing the square', this function can be seen to be proportional to the normal density with covariance matrix
$$
\sigma_0^2\left(\bm\Lambda^{-1}+\bm B^T\bm B\right)^{-1}=\sigma_0^2{\rm Diag}(1/\lambda_j^2+n_j)^{-1}={\rm Diag}\left(\frac{\sigma_0^2}{1/\lambda_j^2+n_j}\right)
$$
where we use that both $\bm \Lambda$ and $\bm B^T\bm B$ are diagonal matrices with diagonal entries $\lambda_j^2$ and $n_j=\sum_i b_{ij}=\#\{i\,:x_i\in I_j\}$ respectively. The expectation is given by
$$
\left( \bm\Lambda^{-1}+\bm B^T\bm B \right)^{-1}(\bm B^T\bm Y+\bm\Lambda^{-1}\bm\zeta)={\rm Diag}\left(\frac{1}{1/\lambda_j^2+n_j}\right)
\left(\begin{array}{c}\zeta_1/\lambda_1^2+\sum_{\{i\,:\,x_i\in I_1\}} y_i\\\vdots\\\zeta_J/\lambda_J^2+\sum_{\{i\,:\,x_i\in I_J\}}y_i\end{array}\right),
$$ 
boiling down to the expression in (\ref{posterior_theta}).\\

\vspace{0.5cm}
\noindent
{\sc Derivation of  (\ref{eq:exprEmpBayesSigma2})}\\
First note that  $I+B\Lambda B^T$ is a block diagonal matrix, where block $j$,  has size $n_j\times n_j$, diagonal elements $1+\lambda_j^2$ and off-diagonal elements $\lambda_j^2$ for $1\le j\le J$. The $j$-th block can be written as
$$
A_j=I_{n_j\times n_j}+\lambda_j^2\one_{n_j} \one_{n_j}^T,
$$
where $I_{k\times k}$ is the $k\times k$ identity matrix and $\one_k$ is the column vector of length $k$ with all elements equal to one. 

This means that $(I+B\Lambda B^T)^{-1}$ is also a block diagonal matrix, with $j$-th block
$$
A_j^{-1}=I_{n_j\times n_j}-\frac{\lambda_j^2}{1+n_j\lambda_j^2}\one_{n_j} \one_{n_j}^T,
$$
by the Sherman Morrison formula.
For convenience, write $z=y-B\zeta$ and denote by subscript $[j]$ the part of a vector corresponding to the $j$-th block (so $i$ for which $x_i\in I_j$; $z_{[j]}$ has length $n_j$). Then
$$
\left[(I+B\Lambda B^T)^{-1}z\right]_{[j]}=A_j^{-1}z_{[j]}=z_{[j]}-\frac{\lambda_j^2}{1+n_j\lambda_j^2}\one_{n_j} \one_{n_j}^Tz_{[j]}
=z_{[j]}-\frac{\lambda_j^2n_j\bar{z}_{[j]}}{1+n_j\lambda_j^2}\one_{n_j},
$$
where $\bar{z}_{[j]}$ denotes the average of the entries of $z_{[j]}$. Therefore
$$
z_{[j]}^T\left[(I+B\Lambda B^T)^{-1}z\right]_{[j]}=z_{[j]}^Tz_{[j]}-\frac{\lambda_j^2n_j^2\bar{z}_{[j]}^2}{1+n_j\lambda_j^2}.
$$
Hence,
\begin{align}
	\label{eq:exprsigmsq}
z^T(I+B\Lambda B^T)^{-1}z=\sum_{j=1}^Jz_{[j]}^T\left[(I+B\Lambda B^T)^{-1}z\right]_{[j]}
=z^Tz -\sum_{j=1}^J\frac{n_j\bar{z}_{[j]}^2}{1+1/(n_j\lambda_j^2)}.
\end{align}
Now write $(1+1/(n_j\lambda_j^2))^{-1}=1-\delta_j$, so $\delta_j=(1+n_j\lambda_j^2)^{-1}$. Then (\ref{eq:exprsigmsq}) can be further rewritten as
\begin{align}
	\label{eq:uitwerking}
&\,\,\,z^T(I+B\Lambda B^T)^{-1}z=z^Tz -\sum_{j=1}^J(1-\delta_j)n_j\bar{z}_{[j]}^2=z^Tz -\sum_{j=1}^Jn_j\bar{z}_{[j]}^2+\sum_{j=1}^J\delta_jn_j\bar{z}_{[j]}^2\\
&\,\,\,=\sum_{j=1}^J\sum_{x_i\in I_j}\left((y_i-\zeta_j)^2-(\bar{y}_j-\zeta_j)^2\right)+\sum_{j=1}^J\delta_jn_j(\bar{y}_j-\zeta_j)^2\nonumber\\
&\,\,\,=\sum_{j=1}^J\sum_{x_i\in I_j}\left(y_i^2-\bar{y}_j^2\right)+\sum_{j=1}^J\delta_jn_j(\bar{y}_j-\zeta_j)^2=
\sum_{j=1}^J\sum_{x_i\in I_j}\left(y_i-\bar{y}_j\right)^2+\sum_{j=1}^J\delta_jn_j(\bar{y}_j-\zeta_j)^2.\nonumber
\end{align}
Substituting $\delta_j=(1+n_j\lambda_j^2)^{-1}$ and writing the first term as one sum, yields (\ref{eq:exprEmpBayesSigma2}).\\

\vspace{0.5cm}
\noindent
{\sc Derivation of Empirical Bayes estimators (\ref{eq:empBayesZeta}) and (\ref{eq:empBayesSigma2})}
The distribution of the observed $\bm Y$ can be expressed in terms of the parameters $\sigma_0^2$, $\Lambda$ and $\zeta$,
$$
{\bm Y}={\bm B}{\bm \theta}+\epsilon={\bm B}({\bm \zeta}+\sigma_0\tilde{\epsilon})+\sigma_0\epsilon={\bm B}\zeta+\sigma_0({\bm B}\tilde{\epsilon}+\epsilon),
$$
where $\epsilon\sim N_n(0,I_{n\times n})$ and $\tilde{\epsilon}\sim N_J(0,\Lambda)$ are independent. Therefore, 
$$
{\bm Y}\sim N_n\left({\bm B}\zeta,\sigma_0^2\left(I_{n\times n}+{\bm B}{\bm \Lambda}{\bm B}^T\right)\right).
$$
Maximizing the likelihood in $\zeta$, for fixed values of $\sigma_0^2$ and $\Lambda$ entails minimizing
$$
(y-B\zeta)^T\left(I_{n\times n}+{\bm B}{\bm \Lambda}{\bm B}^T\right)^{-1}(y-B\zeta).
$$
Recognizing (\ref{eq:uitwerking}) in this expression, it is clear that the empirical Bayes estimate of $\zeta$ is either given by the vector $(\bar{y}_1,\ldots,\bar{y}_J)^T$ if the likelihood is maximized over $\R^J$ or its isotonic regression with weights $n_j\delta_j=n_j/(1+n_j\lambda_j^2)$ if monotonicity is taken into account. 

For any fixed value of $\zeta$, maximizing the the log likelihood of $\sigma_0$ corresponds to minimizing
$$
\frac{n}{2}\log\sigma_0^2+(y-B\zeta)^T\left(I_{n\times n}+{\bm B}{\bm \Lambda}{\bm B}^T\right)^{-1}(y-B\zeta)/(2\sigma_0^2),
$$
yielding (\ref{eq:empBayesSigma2}).

\vspace{0.5cm}
\noindent
\begin{proof}[Proof of Lemma \ref{lemma:localBM_convergence_classic}]
We employ a construction, used in the proof of Lemma 2.2 in \cite{piet:88}. Let $A_n$ be the interval $[t_0-n^{-1/3}\log n,t_0+n^{-1/3}\log n]$ and let $(U_1^*,V_1^*),(U_2^*,V_2^*),\dots$ be an i.i.d sequence of points, (discretely) uniformly distributed on the set of points $(X_i,Y_i)$ such that $X_i\in A_n$.
 
Let $M_n$ be the number of points $X_i\in A_n$. The number of bootstrap draws  such that the first component belongs to $A_n$ has distribution
\begin{align}
\label{restricted_bootstrap}
M_n^*\sim\text{Binom}(n,M_n/n),
\end{align}
so, taking the random variable $M_n^*$ defined by (\ref{restricted_bootstrap}), independent of the sequence $(U_1^*,V_1^*)$, $(U_2^*,V_2^*),\dots$, we can represent the bootstrap variables  such that the first component belongs to $A_n$ by
\begin{align*}
\sum_{i=1}^{M_n^*}\d_{\{(U_i^*,V_i^*)\}},
\end{align*}
where $\d_x$ denotes Dirac measure.

We can couple this process with a Poisson process
\begin{align*}
\sum_{i=1}^{N_n}\d_{\{(U_i^*,V_i^*)\}},
\end{align*}
where
\begin{align*}
N_n\sim \text{Poisson}\,(M_n),
\end{align*}
independent of the $(U_i^*,V_i^*)$, using the construction with a Uniform(0,1) random variable $U$ as in the construction in the proof of Lemma 2.2 in \cite{piet:88}. We find in this way:
\begin{align*}
\P\left\{\sum_{i=1}^{M_n^*}\d_{\{(U_i^*,V_i^*)\}}\ne \sum_{i=1}^{N_n}\d_{\{(U_i^*,V_i^*)\}}\right\}\le 2M_n/n,
\end{align*}
where $M_n/n$ tends to zero almost surely, using an inequality from \cite{vervaat:69}.

This means that we can replace $M_{in}^*$ by $N_{in}$ in (\ref{classic_W_n^*}), where the $N_{in}$ are independent Poisson$(1)$ random variables and can replace $\widetilde W_n^*$ by its Poissonized version
\begin{align}
\label{classic_Poisson W_n^*}
W_n^{(P)}(t)=
&n^{-1/3}\sum_{j:j/J\in[0,t_0+n^{-1/3}t]}\sum_{X_i\in I_j}(N_{in}-1)(Y_i-a_0-n^{-1/3}x)\\
&\qquad\qquad\qquad\qquad-n^{-1/3}\sum_{j:j/J\in[0,t_0]}\sum_{X_i\in I_j}(N_{in}-1)(Y_i-a_0-n^{-1/3}x).
\end{align}
For the latter process we have the martingale structure again, and the quadratic variation process $[W_n^{(P)}](t),\,t\ge0$ satisfies
\begin{align*}
 &\left[W_n^{(P)}\right](t)=n^{-2/3}\sum_{j:j/J\in(t_0,t_0+n^{-1/3}t]}\Bigl\{\sum_{X_i\in I_j}(N_{in}-1)(Y_i-a_0-n^{-1/3}x)\Bigr\}^2\\
&\longrightarrow \s_0^2g(t_0)t.
\end{align*}
for almost all sequences $(X_1,Y_1),\dots$. A similar reation holds for $t<0$.
So the result follows in the same way as in the proof of Lemma \ref{lemma:localBM_convergence_theta}.
\end{proof}

\vspace{0.5cm}
\noindent
\begin{proof}[Proof of Lemma \ref{lemma:localBM_convergence}]
 We consider the case $t\ge0$. It is clear that, conditionally on $(X_1,Y_1),\dots,(X_n,Y_n)$,  $t\mapsto \widetilde W_n^*(t)$ is a martingale with respect to the family of $\s$-algebras $ {\cal F}^*_{n,t},\,t\ge0$, defined by:
 \begin{align*}
 {\cal F}^*_{n,t}=\s\left\{(X_i,E_i^*):X_i\in[t_0+n^{-1/3}t]\right\}.\qquad t\ge0.
 \end{align*}
 The quadratic variation process is given by:
\begin{align*}
 \left[\widetilde W_n^*\right](t)=n^{-2/3}\sum_{i:X_i\in[t_0,t_0+n^{-1/3}t]}\left(E_i^*\right)^2
\end{align*}
We have:
\begin{align*}
&\E\left\{ \left[\widetilde W_n^*\right](t)\Bigm|(X_1,Y_1),\dots,(X_n,Y_n)\right\}\\
&=n^{-2/3}\sum_{i:X_i\in[t_0,t_0+n^{-1/3}t]}\E\left\{\left(E_i^*\right)^2\Bigm|(X_1,Y_1),\dots,(X_n,Y_n)\right\}\\
&=n^{-2/3}\sum_{i:X_i\in[t_0,t_0+n^{-1/3}t]}n^{-1}\sum_{j=1}^n \tilde E_j^2\stackrel{a.s.}\longrightarrow\s_0^2g(t_0)t.
\end{align*}
Note that
\begin{align*}
&\bigl\{Y_j-\tilde f_{nh}(X_j)\bigr\}^2\\
&=\bigl\{Y_j-f_0(X_j)\bigr\}^2+\bigl\{f_0(X_j)-\tilde f_{nh}(X_j)\bigr\}^2+2\bigl\{Y_j-f_0(X_j)\bigr\}\bigl\{f_0(X_j)-\tilde f_{nh}(X_j)\bigr\},
\end{align*}
and that therefore
\begin{align*}
n^{-1}\sum_{j=1}^n\tilde E_j^2\sim n^{-1}\sum_{j=1}^n\bigl\{Y_j-f_0(X_j)\bigr\}^2
\end{align*}
almost surely, as $n\to\infty$, by the properties of $\tilde f_{nh}$.

We can treat the case $t<0$ in a similar way.
This means that we can apply Rebolledo's theorem, see Theorem 3.6, p.\ 68 of \cite{piet_geurt:14} and \cite{rebolledo:80}. The conclusion of the lemma now follows.
 \end{proof}

\bibliographystyle{imsart-nameyear}
\bibliography{cupbook}

\end{document}